\definecolor{verylight}{gray}{0.97}
\definecolor{light}{gray}{0.9}
\definecolor{medium}{gray}{0.85}
\definecolor{dark}{gray}{0.6}
\def\NZQ{\Bbb}               % the font for N,Z,Q,R,C
\def\ZZ{{\NZQ Z}}
\def\FF{{\NZQ F}}
\def\frk{\frak}               % font for "Fraktur"
\def\pp{{\frk p}}
\def\mm{{\frk m}}
\def\Phi{{\frk n}}
\def\Phi{{\frk N}}
\def\MI{{\mathcal I}}
\def\MR{{\mathcal R}}
\def\MT{{\mathcal T}}
\def\MS{{\mathcal S}}
\def\opn#1#2{\def#1{\operatorname{#2}}} % to make operators
\opn\chara{char} \opn\length{\ell} \opn\pd{pd} \opn\rk{rk}
\opn\projdim{proj\,dim} \opn\injdim{inj\,dim} \opn\rank{rank}
\opn\depth{depth} \opn\grade{grade} \opn\height{height}
\opn\embdim{emb\,dim} \opn\codim{codim}
\opn\Tr{Tr} \opn\bigrank{big\,rank}
\opn\superheight{superheight}\opn\lcm{lcm}
\opn\trdeg{tr\,deg}%\emph{
\opn\reg{reg} \opn\lreg{lreg} \opn\ini{in} \opn\lpd{lpd}
\opn\size{size}\opn\bigsize{bigsize}
\opn\cosize{cosize}\opn\bigcosize{bigcosize}
\opn\sdepth{sdepth}\opn\sreg{sreg}
\opn\link{link}\opn\fdepth{fdepth}
\opn\div{div} \opn\Div{Div} \opn\cl{cl} \opn\Cl{Cl}
\opn\Spec{Spec} \opn\Supp{Supp} \opn\supp{supp} \opn\Sing{Sing}
\opn\Ass{Ass} \opn\Min{Min}\opn\Mon{Mon} \opn\dstab{dstab} \opn\astab{astab}
\opn\Syz{Syz}
\opn\Ann{Ann} \opn\Rad{Rad} \opn\Soc{Soc}
\opn\Im{Im} \opn\Ker{Ker} \opn\Coker{Coker} \opn\Am{Am}
\opn\Hom{Hom} \opn\Tor{Tor} \opn\Ext{Ext} \opn\End{End}
\opn\Aut{Aut} \opn\id{id}
\opn\nat{nat}
\opn\pff{pf}%   \pf exists already
\opn\Pf{Pf} \opn\GL{GL} \opn\SL{SL} \opn\mod{mod} \opn\ord{ord}
\opn\Gin{Gin} \opn\Hilb{Hilb}\opn\sort{sort}
\opn\initial{init}
\opn\ende{end}
\opn\height{height}
\opn\type{type}
\opn\aff{aff} \opn\con{conv} \opn\relint{relint} \opn\st{st}
\opn\lk{lk} \opn\cn{cn} \opn\core{core} \opn\vol{vol}
\opn\link{link} \opn\star{star}\opn\lex{lex}
\opn\gr{gr}
\def\pot#1#2{#1[\kern-0.28ex[#2]\kern-0.28ex]}
\opn\dirlim{\underrightarrow{\lim}}
\opn\inivlim{\underleftarrow{\lim}}
\let\union=\cup
\let\sect=\cap
\let\tensor=\otimes
\let\iso=\cong
\let\Dirsum=\bigoplus
\let\to=\rightarrow
\def\Implies{\ifmmode\Longrightarrow \else
        \unskip${}\Longrightarrow{}$\ignorespaces\fi}
\def\implies{\ifmmode\Rightarrow \else
        \unskip${}\Rightarrow{}$\ignorespaces\fi}
\def\iff{\ifmmode\Longleftrightarrow \else
        \unskip${}\Longleftrightarrow{}$\ignorespaces\fi}
\newtheorem{Theorem}{Theorem}[section]
 \newtheorem{Lemma}[Theorem]{Lemma}
 \newtheorem{Corollary}[Theorem]{Corollary}
 \newtheorem{Remark}[Theorem]{Remark}
\let\epsilon\varepsilon
\let\kappa=\varkappa
\def\qed{\ifhmode\textqed\fi
      \ifmmode\ifinner\quad\qedsymbol\else\dispqed\fi\fi}
\def\textqed{\unskip\nobreak\penalty50
       \hskip2em\hbox{}\nobreak\hfil\qedsymbol
       \parfillskip=0pt \finalhyphendemerits=0}
\def\dispqed{\rlap{\qquad\qedsymbol}}
\opn\dis{dis}
\def\pnt{{\raise0.5mm\hbox{\large\bf.}}}
\opn\Lex{Lex}
\begin{document}
 \title{Pseudo-Gorenstein and level Hibi rings}

 \author {Viviana Ene, J\"urgen Herzog, Takayuki Hibi  and Sara Saeedi Madani}

\address{Viviana Ene, Faculty of Mathematics and Computer Science, Ovidius University, Bd.\ Mamaia 124,
 900527 Constanta, Romania, and
 \newline
 \indent Simion Stoilow Institute of Mathematics of the Romanian Academy, Research group of the project  ID-PCE-2011-1023,
 P.O.Box 1-764, Bucharest 014700, Romania} \email{vivian@univ-ovidius.ro}

\address{J\"urgen Herzog, Fachbereich Mathematik, Universit\"at Duisburg-Essen, Campus Essen, 45117
Essen, Germany} \email{juergen.herzog@uni-essen.de}

\address{Takayuki Hibi, Department of Pure and Applied Mathematics, Graduate School of Information Science and Technology,
Osaka University, Toyonaka, Osaka 560-0043, Japan}
\email{hibi@math.sci.osaka-u.ac.jp}

\address{Sara Saeedi Madani, Department of Pure Mathematics, Faculty of Mathematics and Computer Science, Amirkabir University
of Technology (Tehran Polytechnic), 424, Hafez Ave., Tehran 15914, Iran, and
\newline
 \indent School of Mathematics,
Institute for Research in Fundamental Sciences (IPM), P.O. Box 19395-5746, Tehran, Iran} \email{sarasaeedi@aut.ac.ir}

\thanks{The first author was supported by the grant UEFISCDI,  PN-II-ID-PCE- 2011-3-1023.}
\thanks{The paper was written while the forth author was visiting the Department of Mathematics of University Duisburg-Essen. She wants to express her thanks for its hospitality.}

 \begin{abstract}
 We introduce pseudo-Gorenstein rings and characterize those Hibi rings attached to a finite distributive lattice $L$  which are pseudo-Gorenstein. The characterization is given  in terms of the poset of join-irreducible elements of $L$. We also present a necessary condition for Hibi rings to be level. Special attention is given to planar and hyper-planar lattices.   Finally the pseudo-Goresntein and level property of Hibi rings and generalized Hibi rings
 is compared with each other.
 \end{abstract}

\thanks{}
\subjclass[2010]{Primary 05E40, 13C13; Secondary 06D99, 06A11.}
\keywords{Distributive lattices, Hibi rings, pseudo-Gorenstein, level.}

 \maketitle

\section*{Introduction}
\label{introduction}
Let $K$ be a field. Naturally attached to a finite distributive lattice $L$ is a $K$-algebra $K[L]$ which nowadays is called the Hibi ring of $L$. This $K$-algebra was introduced by the third author in 1987, see \cite{H}. In that paper it is shown that $K[L]$ is a normal Cohen--Macaulay domain and that $K[L]$ is Gorenstein if and only if the  poset $P$ of join-irreducible elements of $L$ is pure.

Let $R$ be an arbitrary standard graded Cohen--Macaulay  $K$-algebra with canonical module $\omega_R$. Then $R$ is Gorenstein if and only if $\omega_R$ is a cyclic module, and hence generated in a single degree. The condition on $\omega_R$ may be weakened in different ways. If one only requires that the generators of $\omega_R$ are of the same  degree, then $R$ is called a level ring, and if one requires that there is only one generator of least degree, then we call $R$ a pseudo-Gorenstein ring.

In this paper we intend to characterize the pseudo-Gorenstein and level Hibi rings $K[L]$ in terms of $P$.  For that purpose  we use the basic fact, observed in \cite{H}, that a $K$-basis of $K[L]$ can be described in terms of order reversing functions $\hat{P}\to \ZZ_{\geq 0}$, and that a $K$-basis of the canonical module $\omega_L$ of $K[L]$ can be described in terms of strictly order reversing functions $\hat{P}\to \ZZ_{\geq 0}$. Here $\hat{P}=P\union\{-\infty,\infty\}$ with $-\infty<x<\infty$ for all $x\in P$, see Section~\ref{pseudo} for details.  Since the property of $K[L]$ to be level, Gorenstein or pseudo-Gorenstein does not depend on the field $K$, we simply say that $L$ is level, Gorenstein or pseudo-Gorenstein if $K[L]$ has this property.

In Section~\ref{1} we briefly list conditions which are equivalent to pseudo-Gorenstein and describe the relation of this notion to that of level and Gorenstein. Since pseudo-Gorenstein rings can be identified by the property that the leading coefficient of the numerator polynomial of the Hilbert series is equal to $1$, pseudo-Gorenstein rings are much easier accessible than level rings. Theorem~\ref{classification} gives a full characterization of Hibi rings which are pseudo-Gorenstein. Indeed, it is shown that $L$ is pseudo-Gorenstein if and only if $\depth(x)+\height(x)=\rank \hat{P}$ for all $x\in P$. This is equivalent to say, that for any given $x\in P$ there exists  a  chain of maximal length in $\hat{P}$ passing through $x$. It may be of interest to notice that this property of the chains in $P$ has its analogue in the fact that in an  affine domain $\dim R=\dim R/\pp+\height \pp$ for all $\pp\in \Spec R$.

Though Theorem~\ref{classification} characterizes Hibi rings which are pseudo-Gorenstein, it may  nevertheless be  difficult to apply this characterization efficiently, even for planar  lattices. In Section~\ref{hyper} we introduce hyper-planar lattices which represent a natural extension of planar  lattices to higher dimensions. They are defined by the property that their poset $P$ of join-irreducible elements admits a canonical chain decomposition, that is, a decomposition into pairwise disjoint maximal chains.  In general such a decomposition is not unique. For hyper-planer lattices we introduce a regularity condition with the effect that the height of an element in $P$ is the same as the height of the element in the chain to which it belongs.  Apart from a few exceptions we keep this regularity hypothesis on hyper-planar lattices throughout the rest of the paper. In Theorem~\ref{equallength} it is shown that a regular hyper-planar lattice is pseudo-Gorenstein if and only if all chains in a canonical chain decomposition of $P$ have the same length. For simple planar lattices it is shown in Theorem~\ref{viviana} that the regularity condition is in fact indispensable. Indeed, it is proved  that a simple planar lattice is pseudo-Gorenstein if and only if it is regular and the (two) chains in a canonical chain decomposition of $P$ have the same length. Unfortunately this result cannot be extended to hyper-planar lattices as we show by an example.

The study of level Hibi rings is more difficult. There is a very nice sufficient condition on $P$ that guarantees that $L$ is level. In \cite[Theorem~3.3]{M} Miyazaki showed that $L$ is level if for all $x\in P$ all chains in $\hat{P}$ ascending from $x$ have the same length, and he showed by an example that this condition is not necessary. Let $P^\vee$ be the dual poset of $P$, i.e., the poset on the same set as $P$ but with all order relation reversed, and let $L^\vee$ be the distributive lattice whose poset of  join-irreducible  elements is $P^\vee$. Then it is easily seen that $L$ is level if and only if $L^\vee$ is level. Therefore it follows from Miyazaki's theorem (as remarked by him in his paper), that $L$ is also level if for all $x\in P$, all chains in $\hat{P}$ descending from $x$ have the same length. Thus we call a finite  poset $P$  a {\em Miyazaki poset} if for all $x\in P$ all chains in $\hat{P}$ ascending from $x$ have the same length or all chains in $\hat{P}$ descending from $x$ have the same length. Unfortunately $L$ may be level, though $P$ is not a Miyazaki poset, and this may happen even for regular planar  lattices. On the other hand, in Theorem~\ref{alsoviviana} it is shown that if $L$ is an arbitrary finite distributive lattice which is level, then for all $x,y\in P$ with $x\gtrdot y$ we must have that $\height(x)+\depth(y)  \leq \rank \hat{P}+1$. Here we use the notation  ``$x\gtrdot y$" to express that $x$ covers $y$, that is, $x>y$ and for any $z\in P$ either $z>x$ or $z<y$. At present we do not know whether these inequalities for all covering pairs in $P$ actually characterize the levelness of $L$. On other hand, it is shown in Theorem~\ref{hibi} that a regular planar lattice $L$ is level if and only if these inequalities hold for all covering pairs in $P$. We apply this result  to give in Theorem~\ref{butterfly} an explicit description of those distributive lattices $L$ whose poset $P$ of join-irreducible elements has a special shape which we call a butterfly. In this particular case it turns out that $L$ is level if and only if the initial ideal $\ini(I_L)$ defines a level ring where $I_L$ is the defining ideal of the Hibi ring $K[L]$.  One may wonder whether the regularity condition in Theorem~\ref{viviana} is really needed. In the case of a planar lattice with only one inside corner  the regularity hypothesis may indeed be dropped, as shown in Theorem~\ref{diagonalposet}.

In the last section of this paper we study the pseudo-Gorenstein and level property  of generalized Hibi rings. For a fixed field $K$,  a poset $P$ and any
integer $r$ one defines the so-called  generalized Hibi ring $\MR_r(P)$  which is naturally attached  to $r$-multichains of poset ideals in $P$, see \cite{EHM}.
For $r=2$ one  obtains the ordinary Hibi rings. In Theorem~\ref{new} it is shown that  $R_2(P)$ is pseudo-Gorenstein if and only if   $R_r(P)$ is
pseudo-Gorenstein for  some $r\geq 2$, and that $R_2(P)$ is level if $R_r(P)$ is level for some $r\geq 2$.

\section{Pseudo-Gorenstein rings}
\label{1}
Let $K$ be a field and  $R$  a Cohen-Macaulay standard graded $K$-algebra of dimension $d$ with canonical module  $\omega_R$. We choose a  presentation $R\iso S/I$ where $S=K[x_1,\ldots,x_n]$ is a polynomial ring and $I\subset \mm^2$ with $\mm=(x_1,\ldots,x_n)$. Furthermore, let $\FF$ be the graded minimal free resolution of $S/I$.
It is a simple exercise to see that the following conditions are equivalent:

\begin{enumerate}
\item[(i)] Let $a=\min\{i\:\; (\omega_R)_i\neq 0\}$. Then $\dim_K(\omega_R)_a=1$;
\item[(ii)] Let $y_1,\ldots,y_d$ be a maximal regular sequence of linear forms in $R$ and set $\bar{R}=R/(y_1,\ldots,y_d)R$. Furthermore, let  $b=\max\{i\:\; \bar{R}_i\neq 0\}$. Then
$\dim_K\bar{R}_b=1$.
\item[(iii)] Let $H_R(t)=P(t)/(1-t)^d$  be the Hilbert series of $R$. Then the leading coefficient of $P(t)$ is equal to $1$.
\item[(iv)] The highest shift $c$ in the resolution $\FF$ appears in $F_{n-d}$ and
\[
\beta_{n-d, c}(S/I)=1.
\]
\end{enumerate}

We call $R$ {\em pseudo-Gorenstein} if one (or all) of the above equivalent conditions hold. The ring $R$ is called {\em level} if $\omega_R$ is generated in a single degree. It is clear from (i) that a pseudo-Gorenstein ring is level if and only if it is Gorenstein.

Let $<$ be a monomial order and assume that $S/\ini_<(I)$ is Cohen-Macaulay. It follows from (iv) that  $S/\ini_<(I)$ is pseudo-Gorenstein if and only if $S/I$ is pseudo-Gorenstein. In particular, if $I\subset S$ is a toric ideal such that $S/I$ is Gorenstein and $\ini_<(I)$ is a squarefree monomial ideal, then $S/\ini_<(I)$ is pseudo-Gorenstein. Here we use that $S/\ini_<(I)$ is Cohen--Macaulay, if $I$ is a toric ideal and $\ini_<(I)$ is squarefree, see \cite[Corollary~6.6.18]{V}.

\section{The canonical module of a Hibi ring}
\label{pseudo}

Let $(L,\wedge,\vee)$ be a finite distributive lattice. An element  $\alpha\in L$ is called {\em join-irreducible} if $\alpha\neq \min L$ and  whenever $\alpha=\beta\vee \gamma$, then $\alpha=\beta$ or $\alpha=\gamma$. Let $P$ be the subposet of join-irreducible elements of $L$. By a well-known theorem of Birkhoff \cite{B}, one has that  $L\iso \MI(P)$, where $\MI(P)$ is the lattice of poset ideals of $P$ with the partial order given by inclusion and with union and intersection as join and meet operation. Poset ideals of $P$ are subsets $\alpha$ of $P$ with the property that if $x\in \alpha$ and $y\leq x$, then $y\in \alpha$. In particular, $\emptyset$ is a poset ideal of $P$.

Given a field $K$. The Hibi ring of $L$ over $K$ is the $K$-algebra $K[L]$ generated by the elements $\alpha\in L$ and with the defining relations $\alpha\beta-(\alpha\wedge \beta)(\alpha\vee \beta)$. Identifying $L$ with $\MI(P)$, it is shown in \cite{H} that $K[L]$ is isomorphic to the toric ring generated over $K$ by the elements  $u_\alpha$ with $\alpha \in L$,  where $u_\alpha$ is the monomial  $s\prod_{x\in \alpha}t_x$  in the polynomial ring $K[s,t_x:x\in P]$.

Let $\hat{P}$ be the poset $P\union \{\infty,-\infty\}$ with $-\infty <x<\infty$ for all $x\in P$. A map $v\: \hat{P}\to \ZZ_{\geq 0}$ is called {\em order reversing} if $v(x)\leq v(y)$ for all $x,y\in\hat{P}$  with $x\geq y$, and $v$ is a called {\em strictly order reversing} if  $v(x)<v(y)$ for all $x,y\in\hat{P}$  with $x>y$.

We denote by $\MS(\hat{P})$ the set of all order reversing functions $v$ on $\hat{P}$ with $v(\infty)=0$, and by $\MT(\hat{P})$ the set of all strictly order reversing functions $v$ on $\hat{P}$ with $v(\infty)=0$.

It is shown in \cite{H} that the toric ring $K[L]$ has a $K$-basis consisting of the monomials
\begin{eqnarray}
\label{reversing}
s^{v(-\infty)}\prod_{x\in P}t_x^{v(x)}, \quad v\in \MS(\hat{P}),
\end{eqnarray}
and that the monomials
\begin{eqnarray}
\label{strict}
s^{v(-\infty)}\prod_{x\in P}t_x^{v(x)}, \quad v\in \MT(\hat{P})
\end{eqnarray}
form a $K$-basis of the canonical ideal $\omega_L\subset K[L]$. The (finite) set of elements $v\in  \MT(\hat{P})$  which correspond to a minimal set of generators of $\omega_L$ will be denoted by $\MT_0(\hat{P})$.

It follows from this description of $\omega_L$ that the property of $K[L]$ to be pseudo-Gorenstein, Gorenstein or level does not depend on $K$. Thus we call $L$ itself pseudo-Gorenstein, Gorenstein or level if $K[L]$ has this property

\medskip
Note that $K[L]$ is standard graded with $\deg s^{v(-\infty)}\prod_{x\in P}t_x^{v(x)}=v(-\infty)$ for each of the monomials in (\ref{reversing}).

\medskip
Before proceeding we recall some basic concepts and notation regarding finite posets.
Let $Q$ be an arbitrary poset. A nonempty subposet $C$ of $P$ which is totally ordered is called a {\em chain} in $P$. The {\em length} of $C$ is defined to be $|C|-1$, and denoted $\ell(C)$. The rank of $Q$, denoted $\rank Q$, is defined to be the maximal length of a chain in $Q$. Let  $x\in Q$. Then  $\height_Q(x)$  (resp. $\depth_Q(x)$)  is defined to be  the maximal length of a chain descending  (resp.\ ascending) from $x$ in $Q$. In the case that $Q=\hat{P}$ for some poset $P$, we omit the lower index and simply write $\height(x)$ and $\depth(x)$.

Let $x,y \in P$. It is said that $x$ {\em covers} $y$, denoted $x\gtrdot y$, if $x>y$ and there exists no $z\in P$ such that $x>z>y$.

Let as before $L$ be a finite distributive lattice. Then $L$ is called {\em simple} if  there exist no elements $\alpha,\beta\in L$ with the property
$\alpha\gtrdot \beta$ and such that for each $\gamma\in L$ with $\gamma \neq\alpha,\beta$, we have $\gamma>\alpha$ or $\gamma<\beta$. Let $P$ be the poset of join-irreducible elements of $L$. Then $L$ is simple if and only if there exists no element $x\in P$ which is comparable with all elements in $P$.

\medskip

It is observed in \cite{EHS} that $\min\{v(-\infty)\:\; v \in \MT(\hat{P})\}=\rank \hat{P}$.
Thus it follows that $L$ is pseudo-Gorenstein if and only if there exists precisely one $v\in  \MT(\hat{P})$ with $v(-\infty)=\rank \hat{P}$, and that $L$ is level if and only if for any $v\in  \MT(\hat{P})$ there exists $v'\in  \MT(\hat{P})$ with $v'(-\infty)=\rank \hat{P}$ and such that $v-v'\in \MS(\hat{P})$.

\medskip
In the following  result we characterize  pseudo-Gorenstein distributive lattices in terms of their poset of join-irreducible elements.

\begin{Theorem}
\label{classification}
The distributive lattice   $L$ is pseudo-Gorenstein if and only if
\[\depth(x)+\height(x)=\rank \hat{P}\quad \text{for all}\quad  x\in P.
\]
\end{Theorem}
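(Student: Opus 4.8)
The plan is to reduce everything to the reformulation recorded just before the statement. Write $r=\rank\hat{P}$. By that reformulation, $L$ is pseudo-Gorenstein if and only if there is \emph{exactly one} $v\in\MT(\hat{P})$ with $v(-\infty)=r$. So the task is to count the strictly order reversing functions on $\hat P$ that vanish at $\infty$ and attain the minimal possible value $r$ at $-\infty$. My strategy is to exhibit the pointwise smallest and pointwise largest such functions and to observe that they coincide precisely under the stated numerical condition.

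First I would introduce two functions $v_1,v_2\colon\hat{P}\to\ZZ_{\geq 0}$ by $v_1(x)=\depth(x)$ and $v_2(x)=r-\height(x)$. Passing to a strictly smaller element lengthens every ascending chain and lengthens every descending chain, so $x>y$ gives $\depth(x)<\depth(y)$ and $\height(x)>\height(y)$; hence both $v_1$ and $v_2$ are strictly order reversing. Moreover $v_1(\infty)=\depth(\infty)=0$ and $v_2(\infty)=r-\height(\infty)=0$, so both lie in $\MT(\hat{P})$, while $v_1(-\infty)=\depth(-\infty)=r$ and $v_2(-\infty)=r-\height(-\infty)=r$. Thus $v_1$ and $v_2$ are two (a priori possibly equal) functions realizing the minimum $r$.

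The key step is the sandwich $v_1(x)\leq v(x)\leq v_2(x)$ for every $v\in\MT(\hat{P})$ with $v(-\infty)=r$ and every $x\in P$. For the lower bound, fix a longest ascending chain $x=x_0<x_1<\cdots<x_m=\infty$ with $m=\depth(x)$; strict order reversal forces $v(x_0)>v(x_1)>\cdots>v(x_m)=0$, a strictly decreasing sequence of $m+1$ nonnegative integers, whence $v(x)\geq m=\depth(x)$. Symmetrically, a longest descending chain $x=y_0>y_1>\cdots>y_k=-\infty$ with $k=\height(x)$ yields $v(y_0)<v(y_1)<\cdots<v(y_k)=r$, so $v(x)\leq r-k=r-\height(x)$. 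This is the only genuine computation in the argument, and I expect the identification of $v_1,v_2$ as the extremal minimizers to be the conceptual crux.

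To conclude, note that concatenating a longest ascending and a longest descending chain at $x$ produces a chain of $\hat{P}$ through $x$ of length $\depth(x)+\height(x)$, which is at most $\rank\hat{P}=r$; hence $v_1\leq v_2$ pointwise, with equality exactly when $\depth(x)+\height(x)=r$ for all $x\in P$. If this equality holds everywhere, the sandwich collapses to $v=v_1=v_2$, so the minimizer is unique and $L$ is pseudo-Gorenstein. Conversely, if the equality fails for some $x$, then $v_1(x)<v_2(x)$ exhibits two distinct elements of $\MT(\hat{P})$ with value $r$ at $-\infty$, so the minimizer is not unique and $L$ is not pseudo-Gorenstein. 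This proves the equivalence.
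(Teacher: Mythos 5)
Your proof is correct and follows essentially the same route as the paper: both directions hinge on the two extremal functions $\depth(x)$ and $\rank\hat{P}-\height(x)$, which is exactly the pair the paper uses in its converse, and your sandwich argument for uniqueness is just a repackaging of the paper's observation that $v$ is forced along a chain of maximal length through $x$. No gaps; the verification that both functions lie in $\MT(\hat{P})$ and the two chain estimates are exactly the computations needed.
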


\begin{proof}
Suppose first that $\depth(x)+\height(x)=\rank \hat{P}$ for all $x\in P$. This implies that for any  $x\in \hat{P}$ there exists a chain $C$ of length equal to $\rank \hat{P}$ with $x\in C$. Now let $v$ be any strictly order reversing function on $\hat{P}$ with $v(\infty)=0$ and $v(-\infty)=\rank \hat{P}$. Then for any $y\in C$ we must have $v(y)=\depth(y)$. In particular, $v(x)=\depth(x)$. This shows that $v$ is uniquely determined, and proves that $L$ is pseudo-Gorenstein.

Conversely, suppose that $L$ is pseudo-Gorenstein. For all $x\in\hat{P}$ we set $v(x)=\depth(x)$ and $v'(x)=\rank \hat{P}-\height(x)$. Then both, $v$ and $v'$,  are strictly order reversing  functions on $\hat{P}$ with $v(\infty)=v'(\infty)=0$ and  $v(-\infty)=v'(-\infty)=\rank \hat{P}$. Since $L$ is pseudo-Gorenstein we have $v=v'$. This implies that  $\depth(x)+\height(x)=\rank \hat{P}$ for all $x\in P$.
\end{proof}

\section{Hyper-planar lattices}
\label{hyper}

Let $L$ be a finite distributive lattice and $P$ its poset of join-irreducible elements. We call $L$ a {\em hyper-planar lattice}, if $P$ as a set is the disjoint union of chains $C_1,\ldots, C_d$,  where each $C_i$ is a maximal chain in $P$.  We call such a chain decomposition {\em canonical}.  Of course in general an element $x\in C_i$ may be comparable with an element $y\in C_j$ for some  $j\neq i$. If this is the case and if  $x\gtrdot y$, then we call the chain $x\gtrdot y$ (of length one) a {\em diagonal} of $P$ (with respect  to the given canonical chain decomposition). For example, the poset depicted in Figure~\ref{counter} has two diagonals. If $d=2$, we recover the simple planar lattices.

A canonical chain decomposition of the poset $P$ of join-irreducible elements for a hyper-planar lattice $L$ is in general not  uniquely determined. However we claim  that if $C_1\union C_2\union\cdots \union C_s$ and $D_1\union D_2\union \cdots \union D_t$ are canonical chain decompositions of $P$, then $s=t$.

Indeed, let  $\max(Q)$ denote the set of maximal elements of a finite poset $Q$. Then
\begin{eqnarray}
\label{max}
\max(P)&=&\max(C_1)\union \max(C_2)\union\cdots \union \max(C_s)\\
&=&\max(D_1)\union \max(D_2)\union\cdots \union \max(D_t).\nonumber
\end{eqnarray}
Let $\max(C_i)=\{x_i\}$ for $i=1,\ldots,s$ and $\max(D_i)=\{y_i\}$ for $i=1,\ldots,t$. Then the elements $x_i$ as well as the elements $y_i$ are pairwise distinct, and  it follows from (\ref{max}) that
\[
\{x_1,x_2,\ldots,x_s\}=\{y_1,y_2,\ldots,y_t\},
\]
Hence we see that $s=t$.

\medskip
One would even  expect that the
\begin{eqnarray}
\label{equal}
\{\ell(C_1),\ell(C_2),\ldots, \ell(C_s)\} =\{\ell(D_1),\ell(D_2),\ldots,\ell(D_t)\},
\end{eqnarray}
as multisets. This however is not the case. For the poset $P$ displayed in  Figure~\ref{different} we have the following two canonical chain decompositions
\[
C_1=a<b<c<d<e<f, \quad C_2=g<h<i<j<k<l,
\]
and
\[
D_1=a<b<i<e<f, \quad D_2=g<h<c<d<j<k<l.
\]

Thus we see that  $\ell(C_1)=\ell(C_2)=5$, while  $\ell(D_1)=4$ and $\ell(D_2)=6$.

Also, note that the chain $i\gtrdot b$ is a diagonal of $P$, depicted in Figure~\ref{different}, with respect to the canonical chain decomposition $C_1\cup C_2$, but not is not a diagonal of $P$ with respect to  the canonical chain decomposition $D_1\cup D_2$.

\begin{figure}[hbt]
\begin{center}
\psset{unit=0.8cm}
\begin{pspicture}(-5,0)(4,6)

\rput(-1.5,0){
\psline(0,0)(0,5)
\rput(0,0){$\bullet$}
\rput(0,1){$\bullet$}
\rput(0,2){$\bullet$}
\rput(0,3){$\bullet$}
\rput(0,4){$\bullet$}
\rput(0,5){$\bullet$}
\psline(2,0)(2,6)
\rput(2,0){$\bullet$}
\rput(2,1){$\bullet$}
\rput(2,2.5){$\bullet$}
\rput(2,4){$\bullet$}
\rput(2,5){$\bullet$}
\rput(2,6){$\bullet$}
\psline(0,1)(2,2.5)
\psline(0,4)(2,2.5)
\psline(0,3)(2,4)
\psline(0,2)(2,1)
\rput(-0.3,0){$a$}
\rput(-0.3,1){$b$}
\rput(-0.3,2){$c$}
\rput(-0.3,3){$d$}
\rput(-0.3,4){$e$}
\rput(-0.3,5){$f$}
\rput(2.3,0){$g$}
\rput(2.3,1){$h$}
\rput(2.3,2.5){$i$}
\rput(2.3,4){$j$}
\rput(2.3,5){$k$}
\rput(2.3,6){$l$}
}
\end{pspicture}
\end{center}
\caption{}\label{different}
\end{figure}

In order to guarantee that  equality (\ref{equal}) is  satisfied we have to add an extra condition on the hyper-planar lattice:
let $L$ be a hyper-planar lattice whose poset of join-irreducible elements is  $P$. In what follows this will be our standard assumption and notation.

We say that $L$ is a {\em regular hyper-planar lattice}, if for any canonical chain decomposition $C_1\cup C_2\cup\ldots\cup C_d$ of $P$, and for all $x<y$ with $x\in C_i$ and $y\in C_j$ it follows that $\height_{C_i}(x)<\height_{C_j}(y)$.

\medskip

\begin{Lemma}
\label{regular}
Let $L$ be a regular hyper-planar lattice and $C_1\cup \ldots \cup C_d$ be a canonical chain decomposition of $P$. Then for all $i$ and $x\in C_i$ we have $\height_{C_i}(x)=\height_P(x)$.
\end{Lemma}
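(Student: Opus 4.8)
The plan is to prove the two inequalities $\height_{C_i}(x)\leq \height_P(x)$ and $\height_{C_i}(x)\geq \height_P(x)$ separately. The first inequality is immediate and requires no hypothesis: since $C_i$ is a subposet of $P$, any chain descending from $x$ inside $C_i$ is in particular a chain descending from $x$ in $P$, so $\height_{C_i}(x)\leq \height_P(x)$. The content of the lemma therefore lies entirely in the reverse inequality, where the regularity assumption must be used.

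For the reverse inequality, I would argue as follows. Let $h=\height_P(x)$ and choose a chain $x=z_0\gtrdot z_1\gtrdot\cdots\gtrdot z_h$ realizing this height in $P$ (a saturated descending chain of length $h$). The key observation is that the regularity condition controls how $\height_{C_i}$ behaves along such a chain. Concretely, suppose $z_k\in C_{j}$ and $z_{k+1}\in C_{j'}$ for indices $j,j'$ in the canonical decomposition. Since $z_k>z_{k+1}$, regularity gives $\height_{C_{j'}}(z_{k+1})<\height_{C_{j}}(z_k)$, i.e. the quantity $\height_{C_{(\cdot)}}(z_k)$, measured in whichever chain of the decomposition each $z_k$ lies, strictly decreases at each descending step. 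Along the whole chain this quantity drops by at least one at each of the $h$ steps, and since heights are nonnegative integers, the value of $\height_{C_i}(x)$ at the top must be at least $h$. This yields $\height_{C_i}(x)\geq \height_P(x)$, completing the proof.

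The step I expect to require the most care is the bookkeeping of which chain each $z_k$ belongs to and the correct application of the regularity hypothesis to consecutive (covering) elements of the descending chain. Regularity is stated for \emph{any} pair $x<y$ lying in chains $C_i,C_j$ of the decomposition, so it applies in particular to each covering pair $z_{k+1}<z_k$; one must simply read off the indices correctly. A minor subtlety is to confirm that the starting term $z_0=x$ indeed lies in $C_i$ (which is given) so that the quantity being tracked starts at $\height_{C_i}(x)$. Once the strict-decrease observation is in place, the conclusion is a one-line monotonicity argument over the integer sequence $\height_{C_{(\cdot)}}(z_0)>\height_{C_{(\cdot)}}(z_1)>\cdots>\height_{C_{(\cdot)}}(z_h)\geq 0$.
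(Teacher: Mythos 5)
Your proof is correct and is essentially the paper's argument: the paper phrases the same idea as an induction on $\height_P(x)$, applying regularity to a single covering pair $x\gtrdot y$ with $\height_P(y)=\height_P(x)-1$ at each step, whereas you unroll that induction into a strictly decreasing sequence of nonnegative integers $\height_{C_{j_0}}(z_0)>\height_{C_{j_1}}(z_1)>\cdots>\height_{C_{j_h}}(z_h)\geq 0$ along a maximal descending chain. Both arguments hinge on exactly the same application of the regularity hypothesis combined with the trivial inequality $\height_{C_i}(x)\leq\height_P(x)$, so there is no substantive difference.
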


\begin{proof}
We proceed by induction on $\height_P(x)$. If $\height_P(x)=0$, then there is nothing to show. Now assume that $\height_P(x)>0$ and let $y\in P$ covered by $x$ with $\height_P(y)=\height_P(x)-1$. Say, $y\in C_j$. Since $\height_P(y)=\height_P(x)-1$ we may apply our induction hypothesis and obtain
\[
\height_P(x)-1=\height_P(y)=\height_{C_j}(y)<\height_{C_i}(x)\leq \height_P(x).
\]
This yields the desired conclusion.
\end{proof}

\begin{Corollary}
\label{ell}
Let $L$ be a regular hyper-planar lattice, and  assume that besides  of $C_1\cup \ldots \cup C_d$ there is still another canonical chain decomposition   $D_1\union D_2\union \cdots \union D_d$  of $P$. Then
\begin{enumerate}
\item[{\em (a)}] $\{\ell(C_1),\ell(C_2),\ldots, \ell(C_d)\} =\{\ell(D_1),\ell(D_2),\ldots,\ell(D_d)\}$,
as multisets.
\item[{\em (b)}]
$\rank P=\max\{\ell(C_1),\ldots,\ell(C_d)\}$.
\item[{\em (c)}]
$\height (x)+\depth (x)=\rank \hat{P}$ for all $C_i$ with $\ell(C_i)=\rank P$ and all $x\in C_i$.
\end{enumerate}
\end{Corollary}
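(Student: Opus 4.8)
The plan is to reduce all three parts to a single observation that sharpens Lemma~\ref{regular}: \emph{the length of each chain $C_i$ in a canonical decomposition equals the height in $P$ of its top element}. Indeed, writing $x_i=\max(C_i)$, the whole chain $C_i$ descends from $x_i$, so $\height_{C_i}(x_i)=\ell(C_i)$, and Lemma~\ref{regular} gives $\ell(C_i)=\height_{C_i}(x_i)=\height_P(x_i)$. Thus the multiset of chain lengths is exactly $\{\height_P(x_i):i\}$, and the key point is that the $x_i$ range precisely over $\max(P)$.

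First I would prove (a). From the displayed identity (\ref{max}) the top elements $x_i=\max(C_i)$ are pairwise distinct and satisfy $\{x_1,\dots,x_d\}=\max(P)$, and likewise $\{y_1,\dots,y_d\}=\max(P)$ for $y_j=\max(D_j)$. Combining this with the observation above, both $\{\ell(C_1),\dots,\ell(C_d)\}$ and $\{\ell(D_1),\dots,\ell(D_d)\}$ coincide with the single multiset $\{\height_P(x):x\in\max(P)\}$, whence they are equal.

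Next, for (b), the inequality $\max_i\ell(C_i)\le\rank P$ is immediate since each $C_i$ is a chain in $P$. For the reverse I would pick a chain $M$ of length $\rank P$; its top element $m$ is maximal in $P$ (otherwise $M$ could be extended upward) and satisfies $\height_P(m)=\rank P$. Since $m\in\max(P)=\{x_1,\dots,x_d\}$, we have $m=x_i=\max(C_i)$ for some $i$, and then $\ell(C_i)=\height_P(m)=\rank P$ by the observation, giving (b).

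Finally, for (c), I would fix $C_i$ with $\ell(C_i)=\rank P$ and $x\in C_i$, and set $\hat{C}_i=\{-\infty\}\union C_i\union\{\infty\}$. This is a chain in $\hat{P}$ of length $\ell(C_i)+2=\rank P+2=\rank\hat{P}$, the last equality being the general fact that a longest chain of $\hat{P}$ is obtained by adjoining $-\infty$ and $\infty$ to a longest chain of $P$. Splitting $\hat{C}_i$ at $x$ gives $\height_{\hat{C}_i}(x)+\depth_{\hat{C}_i}(x)=\rank\hat{P}$, and since $\height(x)\ge\height_{\hat{C}_i}(x)$ and $\depth(x)\ge\depth_{\hat{C}_i}(x)$ (because $\hat{C}_i\subseteq\hat{P}$), this forces $\height(x)+\depth(x)\ge\rank\hat{P}$; the opposite inequality holds in any poset by concatenating a longest descending and a longest ascending chain through $x$, so equality follows. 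I do not expect a genuine obstacle here: the only real content is the strengthened form of Lemma~\ref{regular}, after which (a)--(c) are bookkeeping, the one place to be careful being the shift $\rank\hat{P}=\rank P+2$ used in (c).
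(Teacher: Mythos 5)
Your proposal is correct and follows essentially the same route as the paper: both reduce everything to the identity $\ell(C_i)=\height_P(\max(C_i))$ from Lemma~\ref{regular} together with the fact that the maximal elements of the $C_i$ and the $D_j$ are the same set, and both prove (c) by sandwiching $\height(x)+\depth(x)$ between $\ell(\hat{C_i})=\rank\hat{P}$ and $\rank\hat{P}$. The only difference is that you spell out the small steps (top of a longest chain is maximal, $\rank\hat{P}=\rank P+2$) that the paper leaves implicit.
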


\begin{proof}
Let $\max(C_i)=\{x_i\}$ and $\max(D_i)=\{y_i\}$ for $i=1,\ldots,t$. We have seen in the discussion before Lemma~\ref{regular} that
\[
\{x_1,x_2,\ldots,x_d\}=\{y_1,y_2,\ldots,y_d\},
\]
Therefore, \[\{\height_P(x_1),\height_P(x_2),\ldots,\height_P(x_d)\}=\{\height_P(y_1),\height_P(y_2),\ldots,\height_P(y_d)\},\] as multi-sets. By
Lemma~\ref{regular}, $\height_{P}(x_i)=\ell(C_i)$ and  $\height_{P}(y_i)=\ell(D_i)$. This together with the observation that $\rank P=\max\{\height_P(x_1),\height_P(x_2),\ldots,\height_P(x_d)\}$ proves (a) and (b).

In order to prove (c) we observe that
\begin{eqnarray*}
\rank \hat{P}&=&\ell(\hat{C_i})=\height_{\hat{C_i}}(x)+\depth_{\hat{C_i}}(x)\\
&\leq & \height (x)+\depth (x)\leq \rank \hat{P}.
\end{eqnarray*}
\end{proof}

Now we are able to characterize the regular hyper-planar lattices which are  pseudo-Gorenstein.

\begin{Theorem}
\label{equallength}
Let $L$ be a regular hyper-planar lattice and $C_1\cup \ldots \cup C_d$ be a canonical chain decomposition of $P$. Then $L$ is pseudo-Gorenstein if and only if all $C_i$ have the same length.
\end{Theorem}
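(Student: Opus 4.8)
The plan is to deduce both implications from the characterization in Theorem~\ref{classification}, namely that $L$ is pseudo-Gorenstein precisely when $\depth(x)+\height(x)=\rank\hat P$ for every $x\in P$. The inequality $\height(x)+\depth(x)\le\rank\hat P$ always holds, so in each direction the real content is to locate, or to rule out, elements where the sum drops strictly below $\rank\hat P$. Throughout I would freely use Lemma~\ref{regular} (so that $\height_{C_i}(x)=\height_P(x)$) and Corollary~\ref{ell}, together with the elementary observation that $\rank\hat P=\rank P+2$, since every maximal chain of $\hat P$ is obtained from a maximal chain of $P$ by adjoining $-\infty$ and $\infty$.

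For the ``if'' direction, suppose all $C_i$ share a common length $\ell$. By Corollary~\ref{ell}(b) we have $\rank P=\max_i\ell(C_i)=\ell$, so in fact $\ell(C_i)=\rank P$ for every $i$. Since $P=C_1\union\cdots\union C_d$, every $x\in P$ lies in some chain of length $\rank P$, and Corollary~\ref{ell}(c) then yields $\height(x)+\depth(x)=\rank\hat P$. By Theorem~\ref{classification}, $L$ is pseudo-Gorenstein.

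For the converse, I would argue contrapositively: assuming the $C_i$ do not all have the same length, I produce a single $x\in P$ violating the equality of Theorem~\ref{classification}. Since $\rank P=\max_i\ell(C_i)$ by Corollary~\ref{ell}(b) and the lengths are not all equal, some chain satisfies $\ell(C_i)<\rank P$. Let $x_i$ be its top element. The key point is that $x_i$ is a maximal element of $P$: this is exactly the identity $\max(P)=\max(C_1)\union\cdots\union\max(C_d)$ recorded before Lemma~\ref{regular}. Hence $\depth(x_i)=1$ (the only chain ascending from $x_i$ in $\hat P$ is $x_i<\infty$), while $\height_P(x_i)=\height_{C_i}(x_i)=\ell(C_i)$ by Lemma~\ref{regular}, so $\height(x_i)=\ell(C_i)+1$. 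Therefore
\[
\depth(x_i)+\height(x_i)=\ell(C_i)+2<\rank P+2=\rank\hat P,
\]
and Theorem~\ref{classification} shows $L$ is not pseudo-Gorenstein.

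The argument is short once the preparatory results are in hand, so there is no serious obstacle; the only place demanding care is the bookkeeping between heights computed in $P$ and in $\hat P$, and the verification that the top of a short chain is genuinely maximal in $P$ rather than merely maximal within its own chain. That maximality is what forces $\depth(x_i)=1$ and hence makes the sum strictly deficient; it rests on the max-elements identity and on regularity through Lemma~\ref{regular}, and dropping either hypothesis is precisely what would break the equivalence.
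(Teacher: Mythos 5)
Your proof is correct and follows essentially the same route as the paper: the ``if'' direction is verbatim the paper's (Corollary~\ref{ell} plus Theorem~\ref{classification}), and for the converse the paper takes the top element $x$ of a short chain and compares the two functions $v(x)=\depth(x)=1$ and $v'(x)=\rank\hat P-\height(x)>1$ from the proof of Theorem~\ref{classification}, which is exactly your computation $\depth(x)+\height(x)=\ell(C_i)+2<\rank\hat P$ phrased differently. Your care about $x$ being maximal in $P$ (not just in $C_i$) and about the $\hat P$ versus $P$ bookkeeping is exactly the right place to be careful, and both points check out.
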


\begin{proof}
Suppose all $C_i$ have the same length. Then Corollary~\ref{ell} implies that $\ell(C_i)=\rank \hat{P}$ for all $i$. Let $x\in P$. Then $x\in C_i$ for some $i$, and hence $\height (x)+\depth (x)=\rank \hat{P}$, by Corollary~\ref{ell}. Therefore, by Theorem~\ref{classification}, $L$ is pseudo-Gorenstein.

Conversely, suppose that not all $C_i$ have the same length. Then Corollary~\ref{ell} implies that there exists  one $C_i$  with $\ell(C_i)<\rank P$. As in the proof of Theorem~\ref{classification} we consider the strictly order reversing function $v(x)=\depth (x)$ and $v'(x)=\rank \hat{P}-\height (x)$. Let $x=\max(C_i)$. Then $v(x)=1$ and, since $L$ is regular, $v'(x)=\rank \hat{P}-(\ell(C_i)+1)>\rank \hat{P}-\rank P-1=1$. This shows that $L$ is not pseudo-Gorenstein.
\end{proof}

In \cite{M} Miyazaki showed that $L$ is level, if for all $x\in P$  all maximal  chains ascending from $p$ have the same length, or all  maximal  chains descending from $x$ have the same length. We call a poset $P$ with this property a {\em Miyazaki poset}.

\begin{Corollary}
\label{easy}
Let $L$ be a regular hyper-planar lattice and let $C_1\union C_2\union \cdots\union C_d$ be a canonical  chain decomposition of $P$. We   assume  that all $C_i$ have the same length. Then the following conditions are equivalent:
\begin{enumerate}
\item[{\em (a)}] $L$ is Gorenstein;
\item[{\em (b)}] $L$ is level;
\item[{\em (c)}] $P$ is a Miyazaki poset.
\end{enumerate}
\end{Corollary}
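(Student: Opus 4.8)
The plan is to prove the equivalences by establishing the implication chain (a) $\Rightarrow$ (b) $\Rightarrow$ (c) $\Rightarrow$ (a), using the hypothesis that all $C_i$ have the same length to invoke Theorem~\ref{equallength} and conclude that $L$ is pseudo-Gorenstein. The implication (a) $\Rightarrow$ (b) is free: a Gorenstein ring is level since its canonical module is cyclic, hence generated in a single degree. For (b) $\Rightarrow$ (c), I would argue the contrapositive, using the key structural fact that the equal-length hypothesis forces $\ell(C_i)=\rank\hat{P}-1$ and $\rank P = \rank\hat{P}-2$ by Corollary~\ref{ell}(b); since $L$ is already pseudo-Gorenstein, every $x\in P$ satisfies $\depth(x)+\height(x)=\rank\hat{P}$ by Theorem~\ref{classification}.

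The heart of the argument is (c) $\Rightarrow$ (a). Here I would work with the two distinguished strictly order reversing functions from the proof of Theorem~\ref{classification}: $v(x)=\depth(x)$ and $v'(x)=\rank\hat{P}-\height(x)$, both lying in $\MT(\hat{P})$ with $v(-\infty)=v'(-\infty)=\rank\hat{P}$. Since all chains have equal length, pseudo-Gorensteinness gives $v=v'$, i.e.\ $\depth(x)=\rank\hat{P}-\height(x)$ for all $x$. To prove $L$ is Gorenstein I must show $\omega_L$ is cyclic, equivalently that $\MT_0(\hat{P})$ is a singleton. Recall from Section~\ref{pseudo} that $L$ is level iff every $v\in\MT(\hat{P})$ admits some $v'\in\MT(\hat{P})$ with $v'(-\infty)=\rank\hat{P}$ and $v-v'\in\MS(\hat{P})$; and since $L$ is pseudo-Gorenstein, that bottom-degree element $v'$ is unique. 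Thus once I know $L$ is level, the single generator of least degree forces $\omega_L$ to be generated by one element, giving Gorenstein. So the crux is showing the Miyazaki condition implies $L$ is level, which is precisely Miyazaki's theorem quoted just before this corollary, combined with the pseudo-Gorenstein property that pins the number of least-degree generators to one.

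Let me assemble this cleanly. Under the equal-length hypothesis, Theorem~\ref{equallength} already tells us $L$ is pseudo-Gorenstein; therefore there is exactly one $v\in\MT(\hat{P})$ with $v(-\infty)=\rank\hat{P}$. If $P$ is a Miyazaki poset, then by Miyazaki's theorem $L$ is level, so $\omega_L$ is generated in a single degree, namely in degree $\rank\hat{P}$ where there is exactly one basis element. Hence $\omega_L$ is generated by this one element and $L$ is Gorenstein, giving (c) $\Rightarrow$ (a). Combined with the trivial (a) $\Rightarrow$ (b), it remains to check (b) $\Rightarrow$ (c). For this I expect to need that $L$ pseudo-Gorenstein and level implies Gorenstein (this is noted in Section~\ref{1}), and then invoke the classical result of the third author that $L$ is Gorenstein iff $P$ is pure, i.e.\ all maximal chains of $P$ have the same length. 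Purity of $P$ immediately makes $P$ a Miyazaki poset, closing the loop.

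The main obstacle I anticipate is (b) $\Rightarrow$ (c): showing that levelness forces the Miyazaki condition is genuinely delicate in general, since Miyazaki's own example shows the Miyazaki condition is not necessary for levelness for arbitrary lattices. The equal-length hypothesis is what saves us, because it upgrades level to Gorenstein via pseudo-Gorensteinness, and Gorenstein Hibi rings are governed by the clean combinatorial criterion that $P$ is pure. So the real content is recognizing that, in the equal-length regular hyper-planar setting, the three conditions collapse onto the purity of $P$, and that purity is equivalent to the Miyazaki property here. I would therefore organize the final write-up around the single pivot ``pseudo-Gorenstein $+$ level $=$ Gorenstein $=$ purity of $P$,'' after which all three arrows follow with little extra work.
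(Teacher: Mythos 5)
Your proposal is correct and follows essentially the same route as the paper: invoke Theorem~\ref{equallength} to get pseudo-Gorensteinness, use the Section~1 observation that pseudo-Gorenstein $+$ level $=$ Gorenstein, apply Miyazaki's theorem for the direction from the Miyazaki condition to levelness, and Hibi's purity criterion to pass from Gorenstein to the Miyazaki property; the only difference is the cosmetic choice of implication cycle. (One immaterial slip: under the equal-length hypothesis Corollary~\ref{ell}(b) gives $\ell(C_i)=\rank P=\rank\hat{P}-2$, not $\rank\hat{P}-1$.)
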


\begin{proof}
By Theorem~\ref{equallength}, $L$ is pseudo-Gorenstein. Thus (a) and  (b) are equivalent, as noticed in Section 1. The implication (c)\implies (b) follows by Miyazaki \cite{M}, and (a)\implies (c) follows by Hibi's theorem \cite{H} which says that $L$ is Gorenstein if and only if $P$ is pure.
\end{proof}

For simple planar lattices  Theorem~\ref{equallength} can be improved as follows.

\begin{Theorem}
\label{viviana}
Let $L$ be a simple planar lattice,  and let $C_1\union C_2$ be a canonical chain decomposition of $P$.
Then the following conditions are equivalent:
\begin{enumerate}
\item[{\em (a)}] $L$ is pseudo-Gorenstein;
\item[{\em (b)}] $L$ is regular and the chains $C_1$ and $C_2$ have the same length.
\end{enumerate}
\end{Theorem}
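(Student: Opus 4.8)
The plan is to prove the two implications separately, using Theorem~\ref{equallength} for the easy direction and reducing the hard direction to a single new statement about non-regular lattices. Throughout write $C_1\colon a_0\lessdot\cdots\lessdot a_p$ and $C_2\colon b_0\lessdot\cdots\lessdot b_q$, so that $\height_{C_1}(a_i)=i$ and $\height_{C_2}(b_j)=j$, and recall that a simple planar lattice is exactly a hyper-planar lattice with $d=2$. For $(b)\Rightarrow(a)$ there is nothing new: if $L$ is regular and $\ell(C_1)=\ell(C_2)$, then Theorem~\ref{equallength} immediately gives that $L$ is pseudo-Gorenstein. For $(a)\Rightarrow(b)$ I would argue by contraposition, splitting the failure of $(b)$ into two cases. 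If $L$ is regular but $\ell(C_1)\neq\ell(C_2)$, then again Theorem~\ref{equallength} shows $L$ is not pseudo-Gorenstein. Hence the whole theorem reduces to the new statement: a non-regular simple planar lattice is not pseudo-Gorenstein.

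To prove this, I would first replace a failure of regularity by a single \emph{bad diagonal}. As in the proof of Lemma~\ref{regular}, the height-in-its-own-chain function strictly increases along any saturated chain that crosses between the $C_i$ only at covering diagonals; consequently regularity holds for all comparable pairs as soon as it holds for all covering cross-pairs. Thus non-regularity produces a covering relation between the two chains violating the height inequality, and after exchanging $C_1\leftrightarrow C_2$ if necessary I may assume it is $a_s\lessdot b_t$ with $s\ge t$. Among all such bad diagonals I would choose one whose top $b_t$ has smallest possible $\height_P(b_t)$. Elementary bookkeeping (using that $a_0,b_0$ are the minimal and $a_p,b_q$ the maximal elements of $P$) gives $1\le t\le s\le p-1$, that $a_s\parallel b_{t-1}$ and $a_{s+1}\parallel b_t$, and that the only lower covers of $b_t$ are $a_s$ and $b_{t-1}$.

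The heart of the argument is to show that $b_{t-1}$ lies on no chain of length $\rank P$, which by Theorem~\ref{classification} forces $L$ not to be pseudo-Gorenstein; here both hypotheses enter. Planarity, in the form that the diagonals are non-crossing, rules out any covering relation $b_{t-1}\lessdot a_i$: such a diagonal would cross $a_s\lessdot b_t$ unless $i\le s$, and $i\le s$ would give $b_{t-1}<a_s$, contradicting $a_s\parallel b_{t-1}$. Hence $b_t$ is the unique upper cover of $b_{t-1}$ and $\depth_P(b_{t-1})=1+\depth_P(b_t)$. For the heights I would prove, by downward induction on $j$ along $C_2$, that $\height_P(b_j)\le\height_P(a_s)-(t-j)$ for $0\le j\le t-1$; the base case $j=0$ reads $\height_P(a_s)\ge t$, which follows from $\height_P(a_s)\ge s\ge t$. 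In the inductive step the $C_2$-predecessor $b_{j-1}$ is handled by the inductive hypothesis, while any cross-cover $a_i\lessdot b_j$ satisfies $i<j$ by minimality of the chosen diagonal (otherwise $a_i\lessdot b_j$ would itself be a bad diagonal whose top $b_j$ has strictly smaller $\height_P$), so $a_i$ lies far enough below $a_s$ in $C_1$. Taking $j=t-1$ gives $\height_P(b_{t-1})<\height_P(a_s)$, whence $\height_P(b_t)=1+\height_P(a_s)$ and $\height_P(b_{t-1})+\depth_P(b_{t-1})\le(\height_P(a_s)-1)+(1+\depth_P(b_t))=\height_P(b_t)+\depth_P(b_t)-1\le\rank P-1$.

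The main obstacle is precisely this strict height estimate for $b_{t-1}$. It is genuinely false for general hyper-planar lattices, so the proof must exploit $d=2$ twice: planarity (non-crossing) to guarantee that $b_{t-1}$ has no upper cover in the other chain, which controls its depth, and the two-chain structure together with the minimal choice of bad diagonal to run the descending induction keeping $\height_P(b_{t-1})$ strictly below $\height_P(a_s)$. A subsidiary subtlety, which the minimality is designed to overcome, is that an \emph{arbitrary} bad diagonal need not strand $b_{t-1}$ at all (the final inequality can degenerate to an equality); selecting the lowest bad diagonal is exactly what makes the induction close.
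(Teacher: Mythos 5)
Your reduction is sound as far as it goes: (b)$\Rightarrow$(a) via Theorem~\ref{equallength}, the case ``regular but unequal lengths'' via Theorem~\ref{equallength} again, and the observation that regularity only needs to be checked on covering cross-pairs (so that non-regularity yields a bad diagonal) are all correct. The gap is in the heart of your argument, at the claim that $b_t$ is the \emph{unique} upper cover of $b_{t-1}$, which you justify by ``planarity, in the form that the diagonals are non-crossing.'' In this paper a simple planar lattice is, by definition, a hyper-planar lattice with $d=2$, i.e.\ $P$ is a disjoint union of two maximal chains; no non-crossing condition on diagonals is part of that definition, and none follows from it. Concretely, take $C_1\: a_0<a_1<a_2$ and $C_2\: b_0<b_1$ with the two diagonals $a_1\gtrdot\,$--$\,$ reversed, i.e.\ $a_1\lessdot b_1$ and $b_0\lessdot a_2$, and no other comparabilities between the chains. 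Both $C_1$ and $C_2$ are maximal, $L$ is simple, $a_1\lessdot b_1$ is a bad diagonal ($s=t=1$), and yet $b_0=b_{t-1}$ has the second upper cover $a_2$ with $i=2>s$. The ideal lattice is still a perfectly good two-sided ladder (remove $(0,2),(1,2)$ and $(3,0)$ from the $3\times 2$ grid), so ``crossing'' diagonals of opposite orientation are entirely compatible with planarity. Consequently your identity $\depth_P(b_{t-1})=1+\depth_P(b_t)$ fails: lengthening $C_1$ to $a_0<a_1<a_2<a_3$ in the same configuration gives $\depth(b_0)=3$ while $1+\depth(b_1)=2$, and your final chain of inequalities $\height(b_{t-1})+\depth(b_{t-1})\leq\height(b_t)+\depth(b_t)-1$ reads $4\leq 3$. (The theorem's conclusion still holds in these examples --- $b_0$ lies on no maximal chain --- but your proof does not establish it.) The height estimate $\height_P(b_j)\leq\height_P(a_s)-(t-j)$, by contrast, does go through with your minimal choice of bad diagonal; it is only the depth side that is broken.

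For comparison, the paper's proof of (a)$\Rightarrow$(b) is not poset-theoretic at all: it identifies $L$ with a two-sided ladder in an $m\times n$ rectangle, uses the result of \cite{ERQ} that the leading coefficient of the numerator of the Hilbert series equals the number of maximal cyclic sublattices of the ladder, and shows by induction on $m+n$ that pseudo-Gorensteinness forces $m=n$ and forces the inside corners not to cross the diagonal (which is then translated back into regularity plus equal chain lengths). If you want to keep your combinatorial route through Theorem~\ref{classification}, you need a genuinely new argument to bound $\depth(b_{t-1})$ --- or to locate some other element of $P$ missing every maximal chain --- in the presence of an upper cover $a_i$ of $b_{t-1}$ with $i>s$; as written, that case is simply not covered.
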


\begin{proof}
It suffices to prove (a)\implies (b) because the implication (b) \implies (a) is a special case of Theorem~\ref{equallength}. Now for the proof of the implication (a)\implies (b) we use the characterization of pseudo-Gorenstein lattices via the Hilbert series of $K[L]$, as given in (iii), Section 1. Namely,  if $H_{K[L]}(t)=P(t)/(1-t)^d$ with $d=\dim K[L]$, then $L$ is pseudo-Gorenstein if and only if the leading coefficient of $P(t)$ is one. Note that $L$ may be identified with a two-sided ladder inside an $m\times n$ rectangle with $n\leq m$ as shown in Figure~\ref{ladder}.
\texttt{}\begin{figure}[hbt]
\begin{center}
\psset{unit=1cm}
\begin{pspicture}(-5,0)(4,6)

\rput(-4,0){
\rput(-0.3,-0.3){$(0,0)$}
\psline(0,0)(2,0)
\psline(2,0)(2,1)
\psline(2,1)(3,1)
\psline(3,1)(3,2)
\psline(3,2)(6,2)
\psline(6,2)(6,3)
\psline(6,3)(7,3)
\psline(7,3)(7,5)
\psline[linestyle=dotted](2,0)(7,0)
\psline[linestyle=dotted](7,0)(7,3)

\rput(3.5,3){$L$}

\rput(7.3,5.3){$(m,n)$}
\psline(0,0)(0,3)
\psline(0,3)(2,3)
\psline(2,3)(2,4)
\psline(2,4)(3,4)
\psline(3,4)(3,5)
\psline(3,5)(7,5)
\psline[linestyle=dotted](0,3)(0,5)
\psline[linestyle=dotted](0,5)(3,5)

}
\end{pspicture}
\end{center}
\caption{}\label{ladder}
\end{figure}
Now according to \cite{ERQ} the leading coefficient of $P(t)$ is the number of maximal cyclic sublattices of $L$. Thus $L$ is pseudo-Gorenstein if and only if $L$ admits precisely one maximal cyclic sublattice. By a cyclic sublattice of $L$, we mean a sublattice inside the two-sided ladder  in Figure~\ref{ladder} consisting of a chain of  squares and edges as in the example shown in Figure~\ref{cyclic}. We call the cyclic sublattice maximal if it has the maximal number of squares among all cyclic sublattices contained in $L$.

\texttt{}\begin{figure}[hbt]
\begin{center}
\psset{unit=1cm}
\begin{pspicture}(-5,0)(4,6)

\rput(-3,0){
\psline(0,0)(0,1)
\psline(0,0)(1,0)
\psline(0,1)(1,1)
\psline(1,0)(1,1)
\rput(0,0){$\bullet$}
\rput(0,1){$\bullet$}
\rput(1,0){$\bullet$}
\rput(1,1){$\bullet$}

\psline(1,1)(1,2)
\psline(1,2)(1,3)
\rput(1,2){$\bullet$}
\rput(1,3){$\bullet$}

\psline(1,3)(2,3)
\psline(2,3)(2,4)
\psline(2,4)(1,4)
\psline(1,3)(1,4)
\rput(2,3){$\bullet$}
\rput(2,4){$\bullet$}
\rput(1,4){$\bullet$}

\psline(2,4)(3,4)
\rput(3,4){$\bullet$}

\psline(3,4)(4,4)
\psline(4,4)(4,5)
\psline(4,5)(3,5)
\psline(3,5)(3,4)
\rput(3,5){$\bullet$}
\rput(4,4){$\bullet$}
\rput(4,5){$\bullet$}

\psline(4,5)(5,5)
\psline(5,5)(5,6)
\psline(5,6)(4,6)
\psline(4,5)(4,6)
\rput(4,6){$\bullet$}
\rput(5,6){$\bullet$}
\rput(5,5){$\bullet$}
}
\end{pspicture}
\end{center}
\caption{}\label{cyclic}
\end{figure}

In the first step of our proof we show by induction on $n+m$ that if $L$ is pseudo-Gorenstein, then $m=n$ and the lower inside corners of $L$ are below the diagonal connecting $(0,0)$ with $(n,n)$, while the upper inside corners are above this diagonal. In other words, if $(i,j)$ is a lower inside corner of $L$, then $i\geq j$, while for an upper inside corner $(i,j)$ we have $i\leq j$. If this is the case, then we say that the inside corners do not cross the diagonal.

If $m+n=2$, then there is nothing to prove. Let $L'$ be the maximal sublattice of $L$ (again viewed as a ladder) with the bottom and top elements $(0,0)$ and $(m-1,n-1)$, respectively.  Now, we consider two cases. Suppose first that the integral points of the square   $[(m-2,n-2),(m-1,n-1)]$ belong to $L'$. In this case, $L'$ is simple since $L$ is simple. We claim that $L'$ is pseudo-Gorenstein. Indeed, suppose this is not the case. Then $L'$  has at least two different maximal cyclic sublattices. Then because $L$ is simple, the square  $[(m-1,n-1),(m,n)]$ is a subset of $L$. Therefore, each of these maximal sublattices may be extended in $L$ with the square  $[(m-1,n-1),(m,n)]$, contradicting our assumption that $L$ is pseudo-Gorenstein.   Thus, the induction hypothesis implies that  $m-1=n-1$, so that $m=n$. Also, the induction hypothesis implies that the inside corners of $L'$ do not cross the diagonal, and hence this is also the case for $L$.   Now, suppose that $L'$ does not contain the square $[(m-2,n-2),(m-1,n-1)]$.   Then we may assume that for some $j<n-1$ the ladder $L$ contains the squares $[(m-1,k),(m,k+1)]$ for all $k$ with $j\leq k\leq n-1$, but does not contain the squares $[(i,k),(i+1,k)]$ for all  $i$ and $j$ with $1\leq i\leq m-2$ and $j\leq k\leq n-1$. Then $K[L']$ has the same $h$-vector as $K[L'']$ where $L''$ is a sublattice of $L$ which viewed as a ladder is contained in $[(0,0),(m-1,j)]$. Note that $j\neq m-1$, because $j<n-1\leq m$. Therefore, since $L''$ is simple, our induction hypothesis implies that $L''$ is not pseudo-Gorenstein. Hence there exist at least two maximal cyclic sublattices in $L''$, and each of these cyclic sublattices may be extended to maximal cyclic sublattices in $L$, which contradicts the fact that $L$ is pseudo-Gorenstein. Thus this second case is  not possible.

Now we are ready to prove (b): Let $C_1=x_1<x_2<\cdots <x_m$  and $C_2=y_1<y_2<\cdots<y_n$. Then $L$ viewed as a two-sided ladder contains the points $(0,0)$ (corresponding to the poset ideal $\emptyset$ of $L$), and $(m,n)$ (corresponding to the poset ideal $L$). Since $L$ is pseudo-Gorenstein, it follows that $m=n$, as we have seen before. Being regular is equivalent to the condition that the inside corners of a ladder $L$ do not cross the diagonal connecting $(0,0)$ and $(n,n)$. In fact, the join-irreducible elements of $L$ establishing the chain $C_2$ can be identified with the vertices of the ladder (as displayed in Figure~\ref{ladder}) which are located on the vertical border lines of the upper border and are different from the inside corners and different from $(0,0)$, while the  join-irreducible elements of $L$ forming  the chain $C_1$ can be identified with the vertices of the ladder which are located on the horizontal  border lines of the lower  border of $L$ and are different from the inside corners and different from $(0,0)$. After this identification let $x=(i,j)\in C_2$ be  and $y=(k,l)\in C_1$, Then $\height_{C_2}(x)= j$ and $\height _{C_1}(y)=k$. Assume now that $x>y$. Then this implies that $i\geq k$. Since the inside corners of $L$ do not cross the diagonal we have $j>i$, and thus $\height_{C_2}(x)=j>i\geq k=\height _{C_1}(y)$. Similarly, one shows that  $\height_{C_2}(x)<\height_{C_1}(y)$, if $x<y$. This completes the proof.
\end{proof}

Theorem~\ref{viviana} is not valid if the hyper-planar lattice is not planar, as the example displayed in Figure~\ref{notvalid} demonstrates. Indeed,  the lattice $L$ corresponding to $P$ is pseudo-Gorenstein, but in this example we only have one canonical chain decomposition, and the chains of this decomposition have different lengths. Moreover,  $L$   is not regular.

\texttt{}\begin{figure}[hbt]
\begin{center}
\psset{unit=1cm}
\begin{pspicture}(-5,0)(4,2)

\rput(-1.7,0){
\psline(0,0)(0,2)
\rput(0,0){$\bullet$}
\rput(0,1){$\bullet$}
\rput(0,2){$\bullet$}

\psline(2,0)(2,2)
\rput(2,0){$\bullet$}
\rput(2,1){$\bullet$}
\rput(2,2){$\bullet$}

\psline(1,0)(1,2)
\rput(1,0){$\bullet$}
\rput(1,2){$\bullet$}

\psline(0,1)(1,0)
\psline(1,2)(2,1)
}
\end{pspicture}
\end{center}
\caption{}\label{notvalid}
\end{figure}

We also would like to remark that in Theorem~\ref{viviana}(b) the condition ``regular" is required.  Indeed the poset shown in Figure~\ref{counter}  is the poset of join-irreducible elements of a non-regular simple planar lattice $L$ for which $L$ is not pseudo-Gorenstein.

\begin{figure}[hbt]
\begin{center}
\psset{unit=0.8cm}
\begin{pspicture}(-5,0)(4,6)

\rput(-1.5,0){
\psline(0,0)(0,4)
\rput(0,0){$\bullet$}
\rput(0,1){$\bullet$}
\rput(0,2){$\bullet$}
\rput(0,3){$\bullet$}
\rput(0,4){$\bullet$}

\psline(2,0)(2,4)
\rput(2,0){$\bullet$}
\rput(2,1.5){$\bullet$}
\rput(2,2.5){$\bullet$}
\rput(2,3.25){$\bullet$}
\rput(2,4){$\bullet$}
\psline(0,1)(2,1.5)
%\psline(0,4)(2,2.5)
\psline(0,3)(2,2.5)
}
\end{pspicture}
\end{center}
\caption{}\label{counter}
\end{figure}

\section{Level distributive lattices}
\label{level}

Throughout this section  $L$ will be a finite distributive lattice and $P$ its poset of join-irreducible elements.  In the previous section we recalled the fact that $L$ is level if $P$ is a Miyazaki poset. In his paper \cite{M} Miyazaki mentioned the fact that his condition on $P$ is only a sufficient condition. One may ask whether for hyper-planar lattices a stronger result is possible.

We begin with a necessary condition for levelness which is valid for any distributive lattice.

\begin{Theorem}
\label{alsoviviana}
Suppose   $L$ is level.  Then
\begin{eqnarray}
\label{inequality}
\height(x)+\depth(y)  \leq \rank \hat{P}+1
\end{eqnarray}
for all $x,y\in P$ with  $x\gtrdot y$.
\end{Theorem}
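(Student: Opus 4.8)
The plan is to translate levelness into the combinatorial language of $\MS(\hat{P})$ and $\MT(\hat{P})$ recalled just before the theorem, and then to feed one carefully chosen test function into the level criterion. Recall that $L$ is level if and only if for every $v\in\MT(\hat{P})$ there is a $v'\in\MT(\hat{P})$ with $v'(-\infty)=\rank\hat{P}$ and $v-v'\in\MS(\hat{P})$. First I would record two a priori bounds valid for \emph{every} $v'\in\MT(\hat{P})$ with $v'(-\infty)=\rank\hat{P}$: evaluating $v'$ along a longest ascending chain from $z$ to $\infty$ gives $v'(z)\ge\depth(z)$, and evaluating it along a longest descending chain from $-\infty$ to $z$ gives $v'(z)\le\rank\hat{P}-\height(z)$. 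Both hold because $v'$ is strictly order reversing with $v'(\infty)=0$ and $v'(-\infty)=\rank\hat{P}$, so its values must drop by at least one at each step of such a chain.

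The central step is to construct, for the given cover $x\gtrdot y$, a function $v\in\MT(\hat{P})$ with the single extra property $v(y)=v(x)+1$. Passing to the dual order, a strictly order reversing function with $v(\infty)=0$ is exactly the labelling that assigns to the $i$-th listed element the value $i$ in some linear extension (so $\infty$, being maximal in $\hat{P}$, receives $0$). Since $x\gtrdot y$ is a covering relation there is no element strictly between $x$ and $y$, and hence one may choose a linear extension in which $y$ is listed immediately after $x$: contract the pair $\{x,y\}$ to a single element (the cover condition guarantees the quotient is again a poset), take any linear extension of the contracted poset, and split the pair back with $x$ placed just before $y$. The resulting $v$ lies in $\MT(\hat{P})$ and satisfies $v(y)=v(x)+1$.

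Finally, feeding this $v$ into the level criterion yields $v'\in\MT(\hat{P})$ with $v'(-\infty)=\rank\hat{P}$ and $v-v'\in\MS(\hat{P})$. As $v-v'$ is order reversing and $x>y$, we obtain $(v-v')(x)\le(v-v')(y)$, that is $v'(y)-v'(x)\le v(y)-v(x)=1$. Combining this with the two bounds of the first step gives
\[
\depth(y)+\height(x)-\rank\hat{P}\;\le\;v'(y)-v'(x)\;\le\;1,
\]
which is precisely the asserted inequality $\height(x)+\depth(y)\le\rank\hat{P}+1$. The main obstacle is the middle step, namely producing a test function whose gap across the cover is exactly one; this is where the covering hypothesis $x\gtrdot y$ is genuinely used, since for a non-covering pair the two elements cannot be made adjacent in a linear extension, and the remaining inequalities are then immediate bookkeeping.
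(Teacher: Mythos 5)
Your proof is correct, and it takes a genuinely different route from the paper's. The paper argues by contraposition: assuming $\height(x)+\depth(y)>\rank\hat{P}+1$ (hence $\depth(y)>\depth(x)+1$), it constructs the explicit function $v(z)=\depth(z)+\alpha$ on $\{z\le x\}\setminus\{y\}$ and $v(z)=\depth(z)$ elsewhere, with $\alpha=\depth(y)-\depth(x)-1$, and then shows that either $v$ itself or the minimal generator $w\in\MT_0(\hat{P})$ sitting below it has $w(-\infty)>\rank\hat{P}$, contradicting levelness; the heart of that argument is pinning down $w(x)=\depth(y)-1$ from the squeeze $0\le v(x)-w(x)\le v(y)-w(y)\le 0$ and then pushing $w$ down a longest chain from $x$. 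You instead argue directly: you feed a single test function $v$ with $v(y)=v(x)+1$ (built from a linear extension in which the covering pair is adjacent) into the criterion ``for every $v\in\MT(\hat{P})$ there is $v'\in\MT(\hat{P})$ with $v'(-\infty)=\rank\hat{P}$ and $v-v'\in\MS(\hat{P})$,'' and combine $v'(y)-v'(x)\le 1$ with the universal bounds $v'(y)\ge\depth(y)$ and $v'(x)\le\rank\hat{P}-\height(x)$. This avoids the case distinction on whether the test function is a minimal generator and never needs $\MT_0(\hat{P})$ at all, at the price of the (true, but not entirely free) combinatorial lemma that a cover $x\gtrdot y$ can be made adjacent in a linear extension of $\hat{P}$ -- your contraction argument is fine, since the only composite relations created are $a\le b$ with $a<x$ and $b>y$, and a cycle would force an element strictly between $y$ and $x$. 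Two cosmetic points: the word ``exactly'' in your description of $\MT(\hat{P})$ is an overstatement (strictly order reversing functions may repeat values on incomparable elements or have gaps, so they are not all linear-extension labellings), but you only use the inclusion you actually need; and your closing remark that non-covering pairs cannot be made adjacent is true but plays no role. Both proofs ultimately exploit the same mechanism -- order-reversingness of $v-v'$ across the cover plus chain estimates -- so the approaches are cousins, but yours is a cleaner, more economical packaging of it.
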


\begin{proof}
Let $x,y\in P$ such that $x$ covers $y$ and suppose that $\height(x)+\depth(y)  > \rank \hat{P}+1$. We have to show that $L$ is not level.

Our assumption implies that
\[
\height(x)+\depth(y)  > \rank \hat{P}+1\geq \height(x)+\depth(x)+1,
\]
and hence
\[
\depth(y)>\depth(x)+1.
\]
We show that there exists $w\in \MT_0(\hat{P})$ with $w(-\infty)>\rank \hat{P}$. This then proves that $L$ is not level.

Let $\depth(y)-\depth(x)-1=\alpha.$ Then $\alpha>0$. We define $v\: \hat{P}\to {\ZZ}_{\geq 0}$ as follows:

\[
v(z)= \left\{ \begin{array}{ll}
       \depth(z)+\alpha, & \;\textnormal{if $x\geq z$, $z\neq y$}, \\ \depth(z), & \;\text{otherwise.}
        \end{array} \right.
\]
Then $v\in \MT(\hat{P})$. If $v\in \MT_0(\hat{P})$, then we are done, since
\[
v(-\infty)= \depth(-\infty) +\alpha = \rank \hat{P} +\alpha\geq \rank \hat{P}+1.
\]
The last inequality follows from the fact that $\alpha>0$.

On the other hand, if $v\not\in \MT_0(\hat{P})$, then   there exists $w\in\MT_0(\hat{P})$ with $v-w\in \MS(\hat{P})$. It follows that
\[
0\leq v(x)-w(x)\leq v(y)-w(y)=\depth(y)-w(y)\leq 0.
\]
Hence
\[
w(x)=v(x)= \depth(x)+\alpha= \depth(x)+\depth(y)-\depth(x)-1=\depth(y)-1.
\]
Let
\[
x=z_0>z_1>\cdots > z_k=-\infty
\]
be a chain whose length is $\height(x)$. Then
\[
w(x)<w(z_1)<\cdots < w(z_k)=w(-\infty),
\]
which implies that
\[
w(-\infty)\geq w(x)+\height(x)=(\depth(y)-1)+\height(x) > \rank \hat{P}.
\]
\end{proof}

In his paper \cite{M} Miyazaki remarked  that  for all $z\in P$ all chains  ascending from $z$ have the same length if and only if  for all $x,y\in P$ with $x\gtrdot y$, we have  $\depth(y) = \depth(x)+1$.
Therefore,  $P$ is a Miyazaki poset  if and only if
\[
\text{ $\depth(y) = \depth(x)+1$ for all $x,y\in P$ with $x\gtrdot y$,}
\]
or
\[
\text{$\height(x) = \height(y)+1$ for all $x,y\in P$ with $x\gtrdot y$.}
\]
In either case  $L$ is level.

\begin{Corollary}
Suppose $L$ is pseudo-Gorenstein and  $P$ satisfies the inequality (\ref{inequality}) for all $x,y\in P$ with $x\gtrdot y$.   Then  $\depth(y) = \depth(x)+1$ and $\height(x) = \height(y)+1$ for all $x,y\in P$ with $x\gtrdot y$. In particular, $L$ is level and hence Gorenstein.
\end{Corollary}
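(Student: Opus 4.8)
The plan is to combine the pseudo-Gorenstein identity furnished by Theorem~\ref{classification} with the hypothesized inequality (\ref{inequality}), forcing two elementary one-sided bounds to collapse into equalities. Fix a covering pair $x\gtrdot y$ throughout.

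First I would record two bounds that hold with no hypotheses on $L$. Prepending $y$ to a maximal ascending chain from $x$ produces an ascending chain from $y$ one longer, so $\depth(y)\geq \depth(x)+1$; dually, prepending $x$ to a maximal descending chain from $y$ gives $\height(x)\geq \height(y)+1$. (The only point requiring care is keeping straight that $\height$ counts descending chains and $\depth$ ascending ones.)

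Next I would invoke pseudo-Gorensteinness through Theorem~\ref{classification}, which yields $\depth(x)+\height(x)=\rank\hat{P}=\depth(y)+\height(y)$ for the pair $x\gtrdot y$. Rearranging gives $\depth(y)-\depth(x)=\height(x)-\height(y)$; call this common value $a$, so $a\geq 1$ by the previous paragraph. On the other hand, substituting $\height(x)=\rank\hat{P}-\depth(x)$ into inequality (\ref{inequality}) gives $\depth(y)-\depth(x)\leq 1$, that is $a\leq 1$. Hence $a=1$, which is precisely $\depth(y)=\depth(x)+1$ and $\height(x)=\height(y)+1$.

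Finally, the equalities $\depth(y)=\depth(x)+1$ for all covering pairs are exactly the condition recalled just before the Corollary that characterizes $P$ as a Miyazaki poset; thus $L$ is level by Miyazaki's theorem \cite{M}. Since $L$ is both pseudo-Gorenstein and level, it is Gorenstein, as observed in Section~\ref{1}. I do not expect any genuine obstacle here: the argument is simply the observation that the a priori bounds $a\geq 1$ and $a\leq 1$ pinch together, with the identity of Theorem~\ref{classification} supplying the link between the $\depth$ and $\height$ differences.
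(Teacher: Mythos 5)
Your proposal is correct and follows essentially the same route as the paper: combine the identity $\depth(x)+\height(x)=\rank\hat{P}$ from Theorem~\ref{classification} with inequality (\ref{inequality}) to get $\depth(y)\leq\depth(x)+1$, pinch against the trivial reverse bound, and conclude levelness via the Miyazaki condition. Your observation that the two differences $\depth(y)-\depth(x)$ and $\height(x)-\height(y)$ coincide is a minor streamlining of the paper's ``similarly obtained'' for the height equality, not a different argument.
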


\begin{proof}
For all $x,y\in P$ with $x\gtrdot y$, we have $\height (x)+\depth(x) = \rank \hat{P}$, since $L$ is pseudo-Gorenstein. Thus, by the inequality (\ref{inequality}), we have $\height (x)+\depth(y)\leq \height (x)+\depth(x)+1$, and hence $\depth(y)\leq \depth(x)+1$. On the other hand, clearly, we have $\depth(y)\geq \depth(x)+1$, which implies the first desired formula. The formula regarding height is similarly obtained. Therefore, $L$ is level, because $P$ is a Miyazaki poset.
\end{proof}

As mentioned before, if $P$ is not a Miyazaki poset, then $L$ may nevertheless be level, and this may happen even if $L$  is a regular  simple planar lattices.
Figure~\ref{butterflyposet} shows a  poset which is not a Miyazaki poset.  However its  ideal lattice is a   regular   simple planar lattice and is level.

\begin{figure}[hbt]
\begin{center}
\psset{unit=0.8cm}
\begin{pspicture}(-5,2)(4,6)

\rput(-1.5,2){
\psline(0,0)(0,1.5)
\rput(0,0){$\bullet$}
\rput(0,1.5){$\bullet$}

\psline(2,0)(2,2)
\rput(2,0){$\bullet$}
\rput(2,1){$\bullet$}
\rput(2,2){$\bullet$}
\psline(0,0)(2,2)
\psline(0,1.5)(2,0)
}
\end{pspicture}
\end{center}
\caption{}\label{butterflyposet}
\end{figure}

The following result shows that for regular planar lattices the necessary condition for levelness formulated in Theorem~\ref{alsoviviana} is also sufficient.

\begin{Theorem}
\label{hibi}
Let  $L$ be  a regular planar  lattice.  Then the following conditions are equivalent:
\begin{enumerate}
\item[{\em (a)}] $L$ is level;
\item[{\em (b)}] $\height(x)+\depth(y)  \leq \rank \hat{P}+1$ for all $x,y\in P$ with $x\gtrdot y$;
\item[{\em (c)}]  for all $x,y\in P$ with $x\gtrdot y$, either  $\depth(y) = \depth(x)+1$  or $\height(x) = \height(y)+1$.
\end{enumerate}
\end{Theorem}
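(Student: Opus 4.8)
The plan is to prove the cycle (a) $\implies$ (b) $\implies$ (c) $\implies$ (a); the first implication is already available and the other two carry the new content. Indeed (a) $\implies$ (b) is exactly Theorem~\ref{alsoviviana}, so nothing new is needed there. Before the reverse reductions I would record the facts that make the planar case tractable. Writing $C_1\colon x_1<\dots<x_p$ and $C_2\colon y_1<\dots<y_q$ for the canonical chain decomposition, Lemma~\ref{regular} gives $\height(x_i)=i$ and $\height(y_j)=j$, and Corollary~\ref{ell}(b) together with $\rank\hat{P}=\rank P+2$ gives $R:=\rank\hat{P}=\max(p,q)+1$. Regularity is moreover equivalent to the statement that two elements of different chains are comparable only when the one of larger index is the larger element; in particular every element has at most one cover inside each chain, so in a diagonal cover $x\gtrdot y$ the element $x$ is the least element of its chain lying above $y$.

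Next I would establish (b) $\iff$ (c) cover by cover. The implication (c) $\implies$ (b) is the general inequality: if $\depth(y)=\depth(x)+1$ use $\height(x)+\depth(x)\le R$, and if $\height(x)=\height(y)+1$ use $\height(y)+\depth(y)\le R$. For (b) $\implies$ (c), a covering pair $x\gtrdot y$ inside a single chain satisfies $\height(x)=\height(y)+1$ by regularity, so (c) holds automatically. The remaining case is a diagonal cover $x_i\gtrdot y_j$, say with $x_i\in C_1$ (the other case is symmetric); here I argue by contradiction, assuming (c) fails, i.e. $i\ge j+2$ and $\depth(y_j)\ge\depth(x_i)+2$. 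Since the covers of $y_j$ are among $x_i$ and $y_{j+1}$, failure of (c) forces $\depth(y_j)=1+\depth(y_{j+1})$ with $\depth(y_{j+1})\ge\depth(x_i)+1$. Combining the two tail estimates $\depth(x_i)\ge p-i+1$ and $\depth(y_{j+1})\ge q-j$ with the bound $\depth(y_j)\le R+1-i$ coming from (b) yields $R\ge p+2$ and $R\ge q+2$ simultaneously, contradicting $R=\max(p,q)+1$. This closes (b) $\iff$ (c).

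The substantive remaining step, and what I expect to be the main obstacle, is (b) $\implies$ (a). Using the description of $\omega_L$ in Section~\ref{pseudo}, $L$ is level precisely when every element of $\MT_0(\hat{P})$ has $(-\infty)$-value equal to $R$, so it suffices to rule out a minimal generator $w\in\MT_0(\hat{P})$ with $w(-\infty)=N>R$. The key consequence of minimality is a layer-by-layer tightness: if for some $t\in\{1,\dots,N\}$ there were no cover $u\gtrdot v$ with $w(v)=t$ and $w(u)=t-1$, then lowering $w$ by $1$ on the poset ideal $\{z:w(z)\ge t\}$ (which contains $-\infty$) would produce an element of $\MT(\hat{P})$ of strictly smaller degree differing from $w$ by an element of $\MS(\hat{P})$, contradicting minimality. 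Hence for every $t$ such a tight cover $u_t\gtrdot l_t$ exists, and by (c) each is of height type, $\height(u_t)=\height(l_t)+1$, or of depth type, $\depth(l_t)=\depth(u_t)+1$; equivalently each preserves one of the two potentials $w+\height$ or $w-\depth$.

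The plan is then to combine this per-level tightness with the two-chain tail estimates used for (b) $\iff$ (c) to manufacture a single covering pair violating the inequality in (b), contradicting the hypothesis. The delicate point, and the crux of the whole theorem, is that the tight covers supplied at different levels need not assemble into one chain, so the argument must exploit that $P$ consists of only two chains, each contributing a controllable ascending (or descending) tail, in order to route an honest chain through a bad cover; this bookkeeping, rather than any single inequality, is the heart of the proof. As a fallback I would instead imitate the inductive proof of Theorem~\ref{viviana}, presenting $L$ as a two-sided ladder and removing a corner square, and check that both regularity and condition (b) pass to the smaller ladder while levelness is reflected by the extension.
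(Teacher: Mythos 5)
Your treatment of (a)~$\Rightarrow$~(b) and of the equivalence (b)~$\Leftrightarrow$~(c) is correct and essentially parallel to the paper's (the paper gets (b)~$\Rightarrow$~(c) directly from Corollary~\ref{ell}(c), namely $\height(z)+\depth(z)=\rank\hat{P}$ for $z$ in the longest chain, rather than by your contradiction with tail estimates, but both arguments are sound). The genuine gap is in (b)~$\Rightarrow$~(a), which you yourself call ``the heart of the proof'' and then do not carry out. Your layer-by-layer observation is correct as far as it goes: if $w\in\MT_0(\hat{P})$ has $w(-\infty)>\rank\hat{P}$ and some level $t$ admitted no cover $u\gtrdot l$ with $w(u)=t-1$ and $w(l)=t$, then subtracting the indicator function of the poset ideal $\{z:w(z)\geq t\}$ would contradict minimality. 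But this tests $w$ only against one special family of elements of $\MS(\hat{P})$, so it yields only a necessary condition for minimality, and the passage from ``a tight cover of height type or depth type exists at every level'' to ``some covering pair violates (b)'' is exactly the assembly problem you acknowledge and leave open. As written the proposal does not prove (b)~$\Rightarrow$~(a), and the one-sentence ladder fallback is likewise only a plan.

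For comparison, the paper closes this direction by a different mechanism. It first proves Lemma~\ref{biggerone}: under (b), every $v\in\MT_0(\hat{P})$ satisfies $v(\max(C_1))=1$, where $C_1$ is the chain of maximal length; the witness of non-minimality there is obtained by subtracting $1$ at all $z$ with $v(z)\geq\depth(z)+1$, a different and more efficient use of minimality than level-set indicators. It then inducts on $\rank\hat{P}$: remove $\max(C_1)$ and $\max(C_2)$ from $P$, check that the resulting poset $Q$ is again the poset of a regular planar lattice satisfying (b), apply the inductive hypothesis to $u(z)=v(z)-1$ on $\hat{Q}$ to obtain $w$ with $w(-\infty)=\rank\hat{Q}$ and $u-w\in\MS(\hat{Q})$, and finally extend $w+1$ to a $v'\in\MT(\hat{P})$ with $v'(-\infty)=\rank\hat{P}$ and $v-v'\in\MS(\hat{P})$; the delicate point is the choice of $v'(\max(C_2))$ and the verification of strict decrease across the diagonal below $\max(C_2)$, which is where (b) and regularity enter. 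If you want to salvage your direct approach you must actually solve the assembly problem; otherwise the inductive peeling of the two maximal elements together with Lemma~\ref{biggerone} is what really completes the proof.
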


\begin{Remark}
\label{check diagonal}
{\em (i) Observe  that a Miyazaki poset satisfies condition (c). On the other hand,  Figure~\ref{butterflyposet} shows a poset satisfying condition (c) which is not Miyazaki.

(ii) Let $C_1\cup C_2$ be a canonical chain decomposition of $P$. The inequality in (b) and the equations in (c) are always satisfied for those $x\gtrdot y$ for which $x$ and $y$ belong to the same chain in the decomposition. Hence it suffices to check the inequality in (b) and equations in (c) only for diagonals. Indeed, this fact follows directly from  Lemma~\ref{regular}. For instance, if $x$ covers $y$ and both  belong to the same chain, then, by Lemma~\ref{regular}, $\height (x)=\height(y)+1$. Thus $\height(x)+\depth(y)=\height(y)+1+\depth(y)\leq \rank \hat{P} +1$. For (c) we only need to observe that the condition $\height (x)=\height(y)+1$ for any $x$ that  covers $y$ in the same chain, is always fulfilled, again by Lemma~\ref{regular}. }
\end{Remark}

\medskip
Before proving Theorem~\ref{hibi} we  will need the following result.

\begin{Lemma}
\label{biggerone}
Let  $L$ be  a regular planar lattice. Let $C_1\union C_2$ be a canonical chain decomposition of $P$, and  assume  that $\ell(C_1)=\rank P$ (cf.\ Corollary~\ref{ell}). Suppose that $P$ satisfies condition (b) of Theorem~\ref{hibi}. Then for every $v\in \MT_0(\hat{P})$ we have $v(\max (C_1))=1$.
\end{Lemma}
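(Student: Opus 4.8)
The plan is to work with the combinatorial description of the minimal generators of $\omega_L$ encoded by $\MT_0(\hat{P})$, and to prove the contrapositive: if $v\in \MT(\hat{P})$ has $v(\max(C_1))\geq 2$, then $v$ is not a minimal generator. Since $v(\max(C_1))\geq \depth(\max(C_1))=1$ holds for \emph{every} $v\in \MT(\hat{P})$, this yields the assertion. The key preliminary I would record is the reduction mechanism: for an order ideal $J$ of $\hat{P}$ with $\infty\notin J$, subtracting the indicator $\mathbf{1}_J$ carries $v$ into $\MT(\hat{P})$ (nonnegativity being automatic because $\infty\notin J$) exactly when every covering pair $x\gtrdot y$ with $y\in J$, $x\notin J$ satisfies $v(y)\geq v(x)+2$; and a short argument (passing to the top ideal in a chain $J_1\supseteq J_2\supseteq\cdots$ representing a general order reversing $s$) shows that $v$ is reducible if and only if it is reducible by the indicator of a single such ideal. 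Thus $v\in \MT_0(\hat{P})$ iff no order ideal $J$ with $\infty\notin J$ has all of its upper boundary covering pairs of $v$-gap at least $2$.

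Set $x_0=\max(C_1)$ and assume $v(x_0)\geq 2$. I would form $J^{*}$, the smallest order ideal of $\hat{P}$ that contains $x_0$ and is closed under tight upper covers (if $y\in J^{*}$ and $x\gtrdot y$ with $v(x)=v(y)-1$, then $x\in J^{*}$); concretely $J^{*}$ is built from $\langle x_0\rangle$ by alternately adding tight upper covers and taking order-ideal closure. By construction no covering pair on the upper boundary of $J^{*}$ is tight, so once one knows $\infty\notin J^{*}$, reducing by $\mathbf{1}_{J^{*}}$ exhibits $v$ as non-minimal. Because $L$ is planar, $P$ has exactly the two maximal elements $x_0=\max(C_1)$ and $y_n=\max(C_2)$, and $\infty$ can enter $J^{*}$ only as a tight upper cover of a maximal element $z$ with $v(z)=1$. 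As $v(x_0)\geq 2$ excludes $z=x_0$, everything reduces to ruling out the case $v(y_n)=1$ with $y_n\in J^{*}$.

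To show $y_n\notin J^{*}$ I would trace backwards how the elements of $C_2=y_1<\cdots<y_n$ could have been absorbed. Regularity forces $y_1$ to be minimal in $P$ and gives $\height(y_j)=j$; the hypothesis $\ell(C_1)=\rank P$ gives $\height(x_i)+\depth(x_i)=\rank\hat{P}$ along $C_1$ (Corollary~\ref{ell}), whence $\height(x_0)=\rank\hat{P}-1$ and $v(-\infty)\geq v(x_0)+\height(x_0)\geq \rank\hat{P}+1$. An inductive descent then shows that $y_n\in J^{*}$ forces $y_{n-1}\in J^{*}$ with $v(y_{n-1})=2$, then $y_{n-2}\in J^{*}$ with $v(y_{n-2})=3$, and so on, each $y_{n-k}$ being absorbed only as a tight upper cover from $y_{n-k-1}$ (so that $v(y_{n-k})=k+1$), down to $y_1$. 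Finally $y_1$, being minimal, could enter only through the cover $y_1\gtrdot -\infty$, which would have to be tight, i.e.\ $v(-\infty)=v(y_1)+1=n+1$; but $n\leq m$ (with $m=\ell(C_1)+1$) gives $\rank\hat{P}+1=m+2>n+1$, contradicting $v(-\infty)\geq \rank\hat{P}+1$. Hence $\infty\notin J^{*}$ and $v$ reduces.

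The main obstacle is the inductive descent, specifically ruling out the two other ways $y_{n-k}$ might have entered $J^{*}$: lying below $x_0$ (hence absorbed already in $\langle x_0\rangle$), or receiving a tight cover from a diagonal $x_i<y_{n-k}$. The diagonal case is killed by comparing $v(x_i)=v(y_{n-k})+1$ with the regularity inequality $\height_{C_1}(x_i)<\height_{C_2}(y_{n-k})$ and the growth of $v$ along $C_1$, forcing $m\leq n-1$. The case $y_{n-k}\leq x_0$ is the heart of the matter and is exactly where condition (b) enters: the saturated chain from $y_{n-k}$ up to $x_0$ must cross from $C_2$ to $C_1$ at a diagonal $x_a\gtrdot y_b$ with $y_b$ among the already-absorbed chain elements, so $v(y_b)=n-b+1$ and $\depth(y_b)\geq n-b+1$; combining the lower bound $v(x_a)\geq v(x_0)+(m-a)\geq 2+(m-a)$ with $v(x_a)<v(y_b)$ forces $a-b\geq m-n+2$, while $\height(x_a)+\depth(y_b)\leq \rank\hat{P}+1$ of (b) forces $a-b\leq m-n+1$, a contradiction. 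This single computation is where the three hypotheses $\ell(C_1)=\rank P$ (i.e.\ $m\geq n$), regularity, and condition (b) are simultaneously used; planarity is what guarantees only two maximal elements and keeps the descent inside the single chain $C_2$.
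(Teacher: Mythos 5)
Your proof is correct, and while it shares the paper's overall strategy --- exhibit a nonempty order ideal $J\subseteq\hat P$ with $\infty\notin J$ whose upper-boundary covering pairs all have $v$-gap at least $2$, so that $v-\mathbf{1}_J\in\MT(\hat P)$ and hence $v\notin\MT_0(\hat P)$ --- it executes that strategy by a genuinely different route. The paper takes $J=\{z:\ v(z)\geq\depth(z)+1\}$: the gap condition on the boundary is then automatic (from $\depth(x)\leq\depth(y)-1$ for $x\gtrdot y$), and all the work goes into showing $J$ is a down-set, which is dispatched in two short cases ($x\in C_1,\ y\in C_2$, resp.\ $x,y\in C_2$) using condition (b) together with Corollary~\ref{ell}. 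You instead take the minimal admissible ideal, the tight-cover closure $J^{*}$ of $\langle\max(C_1)\rangle$, for which the down-set and gap properties are free by construction but for which one must prove $\infty\notin J^{*}$; this costs the inductive descent along $C_2$. I checked the two arithmetic cruxes and they are sound: the diagonal case yields $m-j\leq i\leq n-j-1$, hence $m<n$, contradicting $\ell(C_1)=\rank P$; and the case $y_{n-j}\leq\max(C_1)$ yields $a-b\geq m-n+2$ from the growth of $v$ along $C_1$ versus $a-b\leq m-n+1$ from condition (b). (In fact the paper's $J$ contains your $J^{*}$, so the two reductions are compatible.) Your version localizes more precisely where each hypothesis enters, at the price of length; the one point you should make explicit is that an element can also enter $J^{*}$ merely by lying below a tight upper cover added at a \emph{later} stage of the closure, not only by lying below $\max(C_1)$ or by being itself a tight upper cover --- this third mode must be ruled out by tracking the first stage at which each $y_c$ appears (it then reduces either to the $y_{n-j}\leq\max(C_1)$ case or to a contradiction with the ordering of those stages). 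This is easily repaired but is currently glossed over.
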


\begin{proof}
Assume that $v(\max(C_1))>1$. Then $v(z)\geq \depth(z)+1$ for all $z\in C_1$.

Let
\[
v'(x)= \left\{ \begin{array}{ll}
       v(x)-1, & \;\textnormal{if $v(x)\geq \depth(x)+1$\;  (I),} \\ v(x), & \;\text{if $v(x)=\depth(x)$\hspace{0.8cm} (II),}
        \end{array} \right.
\]
for all $x\in \hat{P}$.

We show that $v'\in \MT(\hat{P})$ and  $v-v'\in \MS(\hat{P})$. Since $v'\neq v$, this will then show that $v\not\in \MT_0(\hat{P})$, a contradiction. Indeed, to see that  $v'\in \MT(\hat{P})$ we have to show that $v'(x)<v'(y)$ for all $x\gtrdot y$. If both $x$ and $y$ satisfy (I) or (II), then the assertion is trivial. If $x$ satisfies (I) and $y$ satisfies (II), then $v'(x)=v(x)-1<v(y)=v'(y)$, and if  $x$ satisfies (II) and $y$ satisfies (I), then $v(x)=\depth(x)\leq \depth(y)-1\leq v(y)-2$. Hence $v(x)<v(y)-1$, and this implies that $v'(x)<v'(y)$.

It remains to be shown that $v-v'\in \MS(\hat{P})$ which amounts to prove that $v(x)-v'(x)\leq v(y)-v'(y)$ for all $x\gtrdot y$. For this we only need to show that we cannot have $v'(x)=v(x)-1$ and $v(y)=v'(y)$, or, equivalently, that  $v(x)\geq \depth(x)+1$ and $v(y)=\depth(y)$ is impossible.

Assume to the contrary that there exist $x\gtrdot y$ with $v(x)\geq \depth(x)+1$ and $v(y)=\depth(y)$ . Then $y\not\in C_1$ since $v(z)\geq \depth(z)+1$ for all $z\in C_1$. Thus, we may either have $x\in C_1$ and $y\in C_2$, or $x,y\in C_2$.

In the first case, since  $\height(x)+\depth(y)\leq \rank \hat{P}+1$ by assumption, and since $\rank \hat{P}=\height(x)+\depth(x)$ due to the regularity of $L$  (see Corollary~\ref{ell}), we get $\depth(y)\leq \depth(x)+1$,  and hence $\depth(y)=\depth(x)+1$. Therefore,  $\depth(y)=\depth(x)+1\leq v(x)<v(y)$, a contradiction.

Finally, let $x,y\in C_2$. Since $v(x)<v(y)$, it follows that  $\depth(y)>\depth(x)+1$.  Therefore, the longest chain from $y$ to $\infty$ cannot pass through $x$. This implies that there exists $z\in C_1$ with $z\gtrdot y$. As in the first case we then deduce that $v(y)>\depth(y)$. So we get again a contradiction.
\end{proof}

\begin{proof}[Proof of Theorem~\ref{hibi}]
(a) \implies (b) follows from Theorem~\ref{alsoviviana}.

(b) \implies (c): Let $C_1\cup C_2$ be a canonical chain decomposition of $P$ with $|C_1|\geq |C_2|$.

If $x,y \in C_1$ or $x,y \in C_2$, then by Lemma~\ref{regular}, it follows that $\height(x) = \height(y)+1$.

Next suppose that $x\in C_1$. Since $L$ is regular, we may apply Corollary~\ref{ell} and conclude that $\height(x)+\depth(x)=\rank \hat{P}$. Thus, by (b), we get $\depth (y)\leq \depth (x)+1$. On the other hand, it is clear that $\depth (y)\geq \depth (x)+1$. So that $\depth (y)=\depth (x)+1$.
Finally, if $y\in C_1$, then by Corollary~\ref{ell} we have $\height(y)+\depth(y)=\rank \hat{P}$. As in the previous case, we conclude that $\height(x) = \height(y)+1$.

(c)\implies (b): If $\depth(y)=\depth(x)+1$, then $\height(x)+\depth(y)=\height(x)+\depth(x)+1\leq \rank \hat{P}+1$, and if $\height (x)=\height(y)+1$, then $\height(x)+\depth(y)=\height (y)+\depth(y)+1\leq \rank \hat{P}+1$.

(b) \implies (a): As in Lemma~\ref{biggerone}  we let  $C_1\union C_2$ be a canonical chain decomposition of $P$, and  may assume  that $\ell(C_1)=\rank P\geq \ell(C_2)$. Let $v\in \MT_0(\hat{P})$. We will  show that there exists $v'\in \MT(\hat{P})$ with $v'(-\infty)=\rank \hat{P}$ and such that $v-v'\in \MS(\hat{P})$. Since $v$ is a minimal generator it follows that $v=v'$, and we are done.

In order to construct $v'$ we consider the subposet $Q$ of $P$ which is obtained from $P$ by removing the maximal elements $\max(C_1)$ and $\max(C_2)$. We define on $\hat{Q}$ the strictly order reversing function $u$  by $u(\infty)=0$,  and $u(z)=v(z)-1$ for all other $z\in \hat{Q}$. We notice that the ideal lattice of $Q$ is again a regular planar  lattice satisfying (b). Indeed, assume that there exist $x\gtrdot y$ with $x,y\in Q$ such that $\height_{\hat{Q}}(x)+
\depth_{\hat{Q}}(y)>\rank \hat{Q}+1=\rank \hat{P}$. Since $\height_{\hat{Q}}(x)=\height(x)$ and $\depth(y)=\depth_{\hat{Q}}(y)+1$, it follows that
\[
\height(x)+\depth(y)=\height_{\hat{Q}}(x)+\depth_{\hat{Q}}(y)+1>\rank \hat{P}+1,
\]
a contradiction.

Therefore, by induction on the rank we may assume that the ideal lattice of $Q$ is level. Hence there exists $w\in \MT(\hat{Q})$ with $w(-\infty)=\rank \hat{Q}=\rank \hat{P}-1$ and such that $u-w\in \MS(\hat{Q})$. Set $v'(z) =1+w(z)$ for all $z\in A=Q\union\{-\infty\}$. Then $v'$ is a strictly order reversing function on $A$ with $v'(-\infty)=\rank \hat{P}$ and such that $v-v'$ is order reversing on $A$. It remains to define $v'(C_i)$ for $i=1,2$ in a way such  that $v'\in \MT(\hat{P})$ and $v-v'\in \MS(\hat{P})$. We have to set $v'(\max(C_1))=1$ since $v(\max(C_1))=1$, and of course $v'(\infty)=0$. Let $x=\max(C_2)$ and let $z\in C_2$ be the unique element with $x\gtrdot z$. We set $v'(x)= v(x)-u(z)+w(z)= v(x)-v(z)+1+w(z)$, and claim that this $v'$ has the desired properties. Indeed, $v'(x)=v(x)-(v(z)-1-w(z))\leq v(x)$ and $v'(x)<1+w(z)=v'(z)$, since $v(x)<v(z)$. If $z$ is the only element covered by $x$, we are done. Otherwise, there exists $y\in C_1$ with $x\gtrdot y$ and it remains to be shown that $v'(y)>v'(x)=v(x)-v(z)+1+w(z)$. Suppose we know that $\depth_{\hat{Q}}(y)\geq w(z)$,  then
\[
v'(y)=w(y)+1\geq \depth_{\hat{Q}}(y)+1>w(z)\geq v'(x),
\]
as desired, since $v(x)-v(z)+1\leq 0$.  Thus in order to complete the proof we have to show that $\depth_{\hat{Q}}(y)\geq w(z)$. Since the ideal lattice of $\hat{Q}$ is regular, this is equivalent to showing that
\begin{eqnarray}
\label{last}
w(z)\leq \rank \hat{Q}-\height_{\hat{Q}}(y).
\end{eqnarray}
The assumption  (b) and  Corollary~\ref{ell}(c)  imply that
\[
\height(x)+\depth(y)\leq \rank \hat{P}+1=\height(y)+\depth(y)+1,
\]
so that $\height(x)\leq \height(y)+1$. This yields
\begin{eqnarray}
\label{good}
\height(x)= \height(y)+1
\end{eqnarray}
since  $\height(x)\geq \height(y)+1$ is always valid.

On the other hand, since $L$ is regular, Lemma~\ref{regular} implies that  $\height_P(x)= \height_{C_2}(x) =\height_{C_2}(z)+1=\height_{P}(z)+1$. This implies that $\height (x)=\height (z)+1$. So together with (\ref{good})  we then conclude that $\height(y)=\height(z)$. Since $\height_{\hat{Q}}(y)= \height(y)$ and $\height(z)= \height_{\hat{Q}}(z)$, inequality (\ref{last}) becomes $w(z)\leq \rank \hat{Q}-\height_{\hat{Q}}(z)$, and since $w(-\infty) =\rank \hat{Q}$,  this inequality indeed holds. This completes the proof of the theorem.
\end{proof}

In the  following theorem we discuss an example of a poset for which  the conditions of  Theorem~\ref{alsoviviana} can be made more explicit. Let $P$ be a finite poset with a canonical chain decomposition $C_1\cup C_2$ with $2\leq |C_1|\leq |C_2|$. For $i=1,2$, let $x_i$ be the maximal and $y_i$ the minimal element of $C_i$. We call $P$ a {\em butterfly poset} (of type $(C_1,C_2)$), if $x_1\gtrdot y_2$ and $x_2\gtrdot y_1$ are the only diagonals of $P$. Figure~\ref{butterflyposet} displays a butterfly poset. Obviously, the ideal lattice of a butterfly poset is regular.

For the next result, we need some notation. The Hibi ring $K[L]$ can be presented as the quotient ring $T/I_L$, where $T$ is the polynomial ring over $K$ in the variables $x_{\alpha}$ with $\alpha\in L$ and where $I_L$ is generated by the binomials $x_{\alpha}x_{\beta}-x_{\alpha\vee \beta}x_{\alpha\wedge \beta}$. In the following theorem, we consider a monomial order $<$ given by a height reverse lexicographic monomial order, that is, the
reverse lexicographic monomial order induced by a total ordering of the variables satisfying $x_{\alpha} <
x_{\beta}$ if $\height_{L} (\alpha)> \height_{L} (\beta)$.

\begin{Theorem}
\label{butterfly}
Let  $P$ be  a butterfly poset of type $(C_1,C_2)$, and $L$ its ideal lattice.  Then the following conditions are equivalent:
\begin{enumerate}
\item[{\em (a)}] $T/\mathrm{in}_<(I_L)$ is level;
\item[{\em (b)}] $L$ is level;
\item[{\em (c)}] $\height(x)+\depth(y)  \leq \rank \hat{P}+1$ for all $x,y \in P$ with $x\gtrdot y$;
\item[{\em (d)}]  for all $x,y \in P$ with $x\gtrdot y$, either  $\depth(y) = \depth(x)+1$  or $\height(x) = \height(y)+1$;
\item[{\em (e)}] $|C_1|=2$.
\end{enumerate}
\end{Theorem}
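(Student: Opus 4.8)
The plan is to close the cycle $(a)\implies(b)\iff(c)\iff(d)\iff(e)\implies(a)$, doing the genuine work only at the two ends where condition~(a) enters and borrowing the middle equivalences from results already proved. Since the ideal lattice $L$ of a butterfly poset is regular and planar, Theorem~\ref{hibi} applies verbatim and yields $(b)\iff(c)\iff(d)$ with no further argument.

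To prove $(c)\iff(e)$ I would compute heights and depths on the two diagonals. Put $m=|C_1|\le n=|C_2|$ and write $x_i=\max C_i$, $y_i=\min C_i$. As the only relations linking $C_1$ and $C_2$ are the diagonals $x_1\gtrdot y_2$ and $x_2\gtrdot y_1$ between extreme elements, the longest chain of $\hat P$ runs through $C_2$, whence $\rank\hat P=n+1$, $\height(x_1)=m$, $\depth(y_2)=n$, $\height(x_2)=n$ and $\depth(y_1)=m$. By Remark~\ref{check diagonal}(ii) the inequality in (c) only has to be checked on diagonals, where it reads $m+n\le\rank\hat P+1=n+2$, i.e.\ $m\le 2$. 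Since $m\ge2$, this is equivalent to $|C_1|=2$. Combined with Theorem~\ref{hibi} this gives $(b)\iff(c)\iff(d)\iff(e)$.

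For $(a)\implies(b)$ I would use the semicontinuity of Betti numbers under Gr\"obner degeneration. With respect to the height reverse lexicographic order the binomial $x_\alpha x_\beta-x_{\alpha\vee\beta}x_{\alpha\wedge\beta}$ has leading term $x_\alpha x_\beta$ for every incomparable pair $\alpha,\beta$, and these binomials form a Gr\"obner basis; hence $\ini_<(I_L)$ is the Stanley--Reisner ideal of the order complex $\Delta(L)$ of $L$, and $K[\Delta(L)]=T/\ini_<(I_L)$ is Cohen--Macaulay of the same codimension $c$ as $K[L]$. Because $\beta_{ij}(K[L])\le\beta_{ij}(K[\Delta(L)])$ for all $i,j$ and both rings are Cohen--Macaulay of codimension $c$, a concentration of the top Betti numbers $\beta_{c,j}(K[\Delta(L)])$ in a single degree forces the same for $\beta_{c,j}(K[L])$. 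Since the minimal generators of $\omega$ correspond to the top free module in the minimal free resolution (cf.\ the discussion around condition~(iv) in Section~\ref{1}), levelness of a Cohen--Macaulay ring is equivalent to its top Betti numbers being concentrated in one degree; hence $K[\Delta(L)]$ level implies $K[L]$ level, i.e.\ $(a)\implies(b)$.

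The decisive and hardest implication is $(e)\implies(a)$: assuming $|C_1|=2$ I must show that the Stanley--Reisner ring $K[\Delta(L)]$ is itself level. Since $\emptyset$ and $P$ are comparable to every element of $L$, they lie on every maximal chain, so the corresponding vertices are cone points of $\Delta(L)$; thus $K[\Delta(L)]$ is a polynomial extension of the Stanley--Reisner ring of the order complex of the proper part $\bar L=L\setminus\{\emptyset,P\}$, and as levelness is preserved under polynomial extension it suffices to treat $\Delta(\bar L)$. For $|C_1|=2$ the lattice $L$ has a completely explicit skew planar shape, and I would compute the top graded Betti numbers $\beta_{c,j}(K[\Delta(L)])$ directly, either from an explicit shelling of $\Delta(\bar L)$ or by giving a combinatorial description of the minimal generators of $\omega_{K[\Delta(L)]}$ (as a squarefree degeneration of the basis $\MT_0(\hat P)$ of $\omega_L$), and verify that they all lie in the single degree $\rank\hat P$. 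This is the main obstacle: because passing to $\ini_<(I_L)$ can only enlarge the top Betti numbers, levelness of the initial ring is a priori strictly stronger than levelness of $K[L]$, so one must exploit the special combinatorics forced by $|C_1|=2$ to rule out any generator of $\omega_{K[\Delta(L)]}$ in degree exceeding $\rank\hat P$.
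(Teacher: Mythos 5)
Your overall architecture coincides with the paper's: the equivalences (b)$\iff$(c)$\iff$(d) are quoted from Theorem~\ref{hibi} (legitimate, since a butterfly lattice is regular and planar), your height/depth computation on the diagonals giving (c)$\iff$(e) matches the paper's (c)$\implies$(e) step (with $\height(x_1)=|C_1|$, $\depth(y_2)=|C_2|$, $\rank\hat P=|C_2|+1$), and your Betti-number semicontinuity argument for (a)$\implies$(b) is exactly the content behind the paper's ``well-known''. Up to that point the proposal is correct.

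The problem is (e)$\implies$(a), which you yourself flag as ``the main obstacle'' and then do not prove. This implication is where essentially all of the work in the paper's proof lives, and a plan of the form ``compute the top graded Betti numbers directly, either from an explicit shelling or from a combinatorial description of the generators of the canonical module'' is not a proof: neither alternative is carried out, and neither is routine. The paper's argument runs as follows: $\ini_<(I_L)$ is the Stanley--Reisner ideal of the order complex $\Delta$ of $L$, which is pure shellable and hence Cohen--Macaulay; comparing Hilbert series gives $\pd(T/\ini_<(I_L))=|L|-|P|-1=2|C_2|-2$ and $\reg(T/\ini_<(I_L))=|P|-\rank P=2$, so levelness reduces to showing $\beta_{2|C_2|-2,\,j}(T/\ini_<(I_L))=0$ for all $j<2|C_2|$. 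By Hochster's formula this amounts to proving that every induced subcomplex $\Delta_W$ with $|W|=2|C_2|-1$ is connected, and that connectivity is established by an explicit case analysis on the lattice points of the $|C_2|\times 2$ ladder (the cases $j=0,j'=1$; $j=0,j'=2$; $j=1,j'=2$, with careful counting of how many vertices of $W$ can avoid creating a path between two given nonadjacent vertices). None of this combinatorial core appears in your proposal, so the cycle of implications does not close. Your observation that $\hat 0$ and $\hat 1$ of $L$ are cone points of $\Delta$ is correct and would slightly simplify the reduction, but it does not substitute for the connectivity argument.
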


\begin{proof}
(a)\implies (b) is well-known. Conditions (b), (c) and (d) are equivalent by Theorem~\ref{hibi}, since $L$ is regular.

(c)\implies (e): Since $2\leq |C_1|\leq |C_2|$, we have $\height (x_1)=|C_1|$, $\depth (y_2)=|C_2|$ and $\rank  \hat{P}=|C_2|+1$.
On the other hand, by condition (b), we have $\height (x_1)+\depth (y_2)\leq \rank \hat{P}+1$. So, $|C_2|\leq 2$, and hence $|C_2|=2$.

(e)\implies (a): It is shown in \cite{H} that $\ini_<(I_L)$ is generated by the  monomials $x_{\alpha}x_{\beta}$ where  $\alpha$ and $\beta$ are  incomparable elements of $L$. Thus, $\mathrm{in}_<(I_L)$ is the Stanley-Reisner ideal of the order complex $\Delta$ of $L$. It is known that $\Delta$ is pure shellable, and hence Cohen-Macaulay, see \cite[Theorem~6.1]{Bj}. Since $\dim (T/\ini_{<}(I_L))=\dim(T/I_L)=|P|+1$, it follows that $\pd (T/\ini_{<} (I_L))=\pd (T/I_L)=|L|-|P|-1$.
Moreover, since $T/\ini_{<}(I_L)$ and $T/I_L$ are Cohen-Macaulay, their regularity is given by the degree of the numerator polynomial of their Hilbert series. Hence, since both Hilbert series coincide, their regularity is the same and we obtain $\reg (T/\mathrm{in}_<(I_L))=\reg (T/I_L)=|P|-\rank P$, by \cite{EHS}. Since  $|C_1|=2$,  the lattice $L$ viewed as a ladder is of the form as shown in Figure~\ref{butterflylattice}. So, we see that  $|L|=3|C_2|+1$. Therefore, since $|P|=|C_2|+2$, we see that $\pd (T/\mathrm{in}_<(I_L))=2|C_2|-2$ and $\reg (T/\mathrm{in}_<(I_L))=2$.
\texttt{}\begin{figure}[hbt]
\begin{center}
\psset{unit=1cm}
\begin{pspicture}(-5,0)(4,3)

\rput(-4,0){
\rput(-0.4,-0.4){$(0,0)$}
\psline(0,0)(2,0)
\psline(3,0)(5,0)
\psline(6,0)(7,0)

\psline(0,0)(0,1)
\psline(1,0)(1,2)
\psline(2,0)(2,2)
\psline(3,0)(3,2)
\psline(4,0)(4,2)
\psline(5,0)(5,2)
\psline(7,0)(7,2)
\psline(6,0)(6,2)
\psline(8,1)(8,2)

\psline(0,1)(2,1)
\psline(3,1)(5,1)
\psline(6,1)(8,1)

\psline(1,2)(2,2)
\psline(3,2)(5,2)
\psline(6,2)(8,2)

\psline[linestyle=dotted](2,0)(3,0)
\psline[linestyle=dotted](5,0)(6,0)
\psline[linestyle=dotted](2,1)(3,1)
\psline[linestyle=dotted](5,1)(6,1)
\psline[linestyle=dotted](2,2)(3,2)
\psline[linestyle=dotted](5,2)(6,2)

\rput(8.4,2.4){$(|C_2|,2)$}
}
\end{pspicture}
\end{center}
\caption{}\label{butterflylattice}
\end{figure}

So to prove (a), it is enough to show that $\beta_{2|C_2|-2,j}(T/\mathrm{in}_<(I_L))=0$, for all $j< 2|C_2|$. By  Hochster's formula, we have
\[
\beta_{2|C_2|-2, j}(T/\mathrm{in}_<(I_L))=\sum_{W\subseteq V, |W|=j}\dim _{K}\widetilde{H}_{j-2|C_2|+1}({\Delta}_W;K),
\]
where $V$ is the  vertex set of  $\Delta$, and where ${\Delta}_W$ denotes as usual the  subcomplex of  $\Delta$ induced by $W$. For $W\subseteq V$ with
$|W|=j<2|C_2|-1$,  $\dim _{K}\widetilde{H}_{j-2|C_2|+1}({\Delta}_W;K)=0$, since $j-2|C_2|+1<0$. Let $W\subseteq V$ with $|W|=2|C_2|-1$. We show that
$\dim _{K}\widetilde{H}_{0}({\Delta}_W;K)=0$ or equivalently ${\Delta}_W$ is a connected simplicial complex. As the connectedness of ${\Delta}_W$ is equivalent to the connectedness of the 1-skeleton ${\Delta}_W^{(1)}$ of ${\Delta}_{W}$, we show that the graph $G={\Delta}_W^{(1)}$ is connected. The vertices of $G$ correspond to the lattice points of the elements in $W$ which is a certain subset of the lattice points of the ladder displayed in Figure~\ref{butterflylattice}. Two vertices of $G$ are adjacent if they are contained in a chain in the lattice $L$. In other words, two vertices $(i,j)$ and $(i',j')$ with $j\leq j'$ are adjacent in $G$ if $j=j'$ or $i\leq i'$.

Let $v=(i,j)$ and $w=(i',j')$ be two nonadjacent vertices in $G$, so that we may assume $j<j'$ and $i'<i$. We show that there exists a path in $G$ connecting  $v$ and $w$.

Case 1. There exists $(s,t)\in W$ such that either $s\leq i'$, $t\leq j$ or $s\geq i$, $t\geq j'$. Then $(s,t)$ is adjacent to both of $v$ and $w$, and hence $v$ and $w$ are connected.

Case 2. There exists $s$ with $i'<s<i$ such that $(s,j),(s,j')\in W$. Then we get a path in $G$ between $v$ and $w$ passing through $(s,j)$ and $(s,j')$.

Assume that $W$ does not satisfy the conditions given in Case 1 and Case 2.

Let $A=\{(s,t)\in W:i'<s<i, t=j~\mathrm{or}~t=j'\}$. Then $|A|\leq i-i'-1$.

First suppose that $j=0$ and $j'=1$. Also, set $A_1=\{(s,0):i\leq s\leq |C_2|-1\}$, $A_2=\{(s,1):0\leq s\leq i'\}$ and $A_3=\{(s,2):1\leq s<i\}$. Since we are not in Case 1 and Case 2, it follows that $W\subset A_1\cup A_2\cup A_3\cup A$. Therefore,
\begin{eqnarray*}
|W|&\leq & |A_1|+|A_2|+|A_3|+|A| \leq (|C_2|-i)+(i'+1)+(i-1)+(i-i'-1)\\
&=& |C_2|+i-1< 2|C_2|-1.
\end{eqnarray*}
The last inequality follows since $i<|C_2|$. This contradicts the fact that $|W|=2|C_2|-1$.

Now suppose that $j=0$ and $j'=2$ and set $A_1=\{(s,0):i\leq s\leq |C_2|-1\}$, $A_2=\{(s,1):0\leq s\leq |C_2|\}$, $A_3=\{(s,2):1\leq s\leq i'\}$.
We claim that $|A_2\cap W|\leq |A_2|-2$, unless there is a path in $G$ connecting $v$ and $w$. Indeed, we may assume that either $(i,1)$ or $(i',1)$ belongs to $W$. If both vertices belong to $W$, then there is a path in $G$ between $v$ and $w$. Without loss of generality, we may  assume that $(i,1)\in W$ and $(i',1)\notin W$. In that case, non of the elements in the nonempty set $\{(s,1):s<i'\}$ belongs to $W$, because otherwise we get a path connecting $v$ and $w$ in $G$. This proves the claim.

Since we are not in Case 1 and Case 2, we have $W\subset A_1\cup (A_2\cap W)\cup A_3\cup A$. As we may assume that $|A_2\cap W|\leq |A_2|-2$, we get
\begin{eqnarray*}
|W|&\leq & |A_1|+|A_2\cap W|+|A_3|+|A| \leq (|C_2|-i)+(|C_2|-1)+(i')+(i-i'-1)\\
&=& 2|C_2|-2,
\end{eqnarray*}
a contradiction.

Finally, $j=1$ and $j'=2$ is similarly treated as the case $j=0$ and $j'=1$.
\end{proof}

As a straightforward consequence of the next result it can be seen that the implication (a)\implies (c) in Theorem~\ref{hibi} is in general not valid for non-planar lattices, not even for hyper-planar lattices.

\begin{Theorem}
\label{longchain}
Let $P=P_1\union P_2$ be a finite poset with the property that the elements of $P_1$ and $P_2$ are incomparable to each other, and suppose that $P_2$ is a chain of length $r$. Let $L$ be the ideal lattice of $P$. Then $L$ is level for all $r\gg 0$.
\end{Theorem}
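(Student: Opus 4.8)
The plan is to work entirely inside the combinatorial model of $\omega_L$ given by $\MT(\hat{P})$ and $\MS(\hat{P})$, and to reduce levelness to a bounded‑degree statement about the \emph{fixed} poset $P_1$. Write $L_i=\MI(P_i)$ and $\hat{P}_i=P_i\union\{-\infty,\infty\}$. Recall that $L$ is level iff every $v\in\MT_0(\hat{P})$ satisfies $v(-\infty)=\rank\hat{P}$. Since $P_2$ is a chain of length $r$ incomparable to $P_1$, once $r\geq\rank P_1$ we have $\rank\hat{P}=\max(\rank P_1,r)+2=r+2$; so the goal is to show that every minimal generator has degree exactly $r+2$.

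First I would record a purely combinatorial criterion for minimality. Writing the $K[L]$‑action additively, $v\in\MT(\hat{P})$ fails to lie in $\MT_0(\hat{P})$ precisely when $v=g+w$ with $g\in\MS(\hat{P})\setminus\{0\}$ and $w\in\MT(\hat{P})$. I claim this holds iff there is a poset ideal $A$ of $\hat{P}$ with $-\infty\in A$, $\infty\notin A$, such that every covering pair $x\gtrdot y$ with $y\in A$, $x\notin A$ has gap $v(y)-v(x)\geq 2$. Indeed, given $g$ one checks that $A=\{z:g(z)\geq 1\}$ works, since at an upper boundary cover $g(y)-g(x)\geq 1$ together with $w=v-g\in\MT(\hat{P})$ forces $v(y)-v(x)\geq 2$; conversely the indicator $\mathbf 1_A$ of such an $A$ is order reversing with $\mathbf 1_A(\infty)=0$, and $v-\mathbf 1_A$ is again \emph{strictly} order reversing exactly because the boundary gaps are $\geq 2$. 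Thus non‑minimality amounts to finding one poset ideal whose upward boundary avoids all ``tight'' covers (gap $1$).

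The decisive point is that this criterion decouples over the disjoint union. Since no element of $P_1$ is comparable to one of $P_2$, every cover of $\hat{P}$ lies inside $\hat{P}_1$ or inside $\hat{P}_2$, and a poset ideal $A$ of $\hat{P}$ is exactly the datum of the two ideals $A\sect\hat{P}_1$ and $A\sect\hat{P}_2$ (which share only $-\infty$ and may be chosen independently). Hence a ``good'' ideal for $v$ on $\hat{P}$ exists iff good ideals exist simultaneously for $v|_{\hat{P}_1}$ on $\hat{P}_1$ and for $v|_{\hat{P}_2}$ on $\hat{P}_2$. Via the criterion this yields the one implication I actually need: if \emph{both} restrictions $v|_{\hat{P}_1}$ and $v|_{\hat{P}_2}$ are non‑minimal, then the union of their good ideals is a good ideal for $v$, so $v$ itself is non‑minimal on $\hat{P}$.

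It remains to feed in $r\gg 0$. On the chain $\hat{P}_2$ the poset is pure, so $L_2$ is Gorenstein, $\omega_{L_2}$ is cyclic, and its unique minimal generator has degree $\rank\hat{P}_2=r+2$; hence every $v|_{\hat{P}_2}$ of degree $>r+2$ is non‑minimal on $\hat{P}_2$. For the fixed poset $P_1$ set $D_1=\max\{v'(-\infty):v'\in\MT_0(\hat{P}_1)\}$, a finite constant; as $\omega_{L_1}$ is generated in degrees $\leq D_1$, every $v|_{\hat{P}_1}$ of degree $>D_1$ is non‑minimal on $\hat{P}_1$. Choosing $r$ with $r+2>D_1$ (which also forces $r\geq\rank P_1$, since $D_1\geq\rank\hat{P}_1>\rank P_1$), any $v\in\MT(\hat{P})$ with $v(-\infty)>r+2$ has both restrictions non‑minimal, hence is non‑minimal on $\hat{P}$. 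Therefore every element of $\MT_0(\hat{P})$ has degree $r+2=\rank\hat{P}$, so $L$ is level. I expect the main obstacle to be the first step — establishing the poset‑ideal criterion cleanly in both directions — together with the subtle structural fact that minimality of a \emph{single} restriction already forces global minimality; it is precisely this that makes us require \emph{both} factors to be non‑minimal, and hence makes $r+2>D_1$ the natural meaning of $r\gg 0$.
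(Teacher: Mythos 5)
Your proof is correct, and it takes a genuinely different route from the paper's. The paper argues structurally: since the elements of $P_1$ and $P_2$ are incomparable, $K[L]\iso K[L_1]*K[L_2]$ (Segre product), so by Goto--Watanabe $\omega_L\iso\omega_{L_1}*\omega_{L_2}$; because $K[L_2]$ is a polynomial ring $S$ of dimension $r+2$, this gives $\omega_L\iso(\omega_{L_1})_{\geq r+2}$, which is generated in the single degree $r+2$ once $r+2$ reaches the top generating degree of $\omega_{L_1}$. You instead stay entirely inside the combinatorial model: you characterize non-membership in $\MT_0(\hat{P})$ by the existence of a poset ideal $A\subseteq\hat{P}$ with $-\infty\in A$, $\infty\notin A$, all of whose upward boundary covers have gap $v(y)-v(x)\geq 2$ (the two directions via $A=\{g\geq 1\}$ and $g=\mathbf{1}_A$ both check out), and you observe that over a disjoint union this criterion decouples, so that non-minimality of both restrictions forces non-minimality of $v$; I verified the boundary-cover bookkeeping, including the covers through $\pm\infty$, and the union $A_1\union A_2$ is indeed a good ideal. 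Both arguments land on essentially the same threshold ($r+2$ at least the top generating degree $D_1$ of $\omega_{L_1}$; your strict inequality is slightly more than needed but harmless). What the paper's route buys is brevity and the sharper structural statement $\omega_L\iso(\omega_{L_1})_{\geq r+2}$, which immediately yields the more general remark following the theorem (levelness whenever $\gamma(P_2)\geq\gamma(P_1)$ and $L_2$ is level). What your route buys is self-containedness — no Segre products or \cite{GW} — plus a reusable gap-$\geq 2$ criterion for minimal generators that is implicitly the mechanism behind the subtractions of indicator functions $u$ of poset ideals in the proofs of Lemma~\ref{biggerone} and Theorem~\ref{diagonalposet}; note that your decoupling argument also recovers the same generalization if you replace the chain $P_2$ by any poset whose ideal lattice is level with $\gamma(P_2)\geq\gamma(P_1)$.
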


\begin{proof}
Let $L_1$ by the ideal lattice of $P_1$, and $L_2$ that of $P_2$. It is observed in \cite{HHR}   and easy to see that $K[L]\iso K[L_1]*K[L_2]$ (which is the Segre product of $K[L_1]$ and $K[L_2]$). By \cite[Theorem 4.3.1]{GW} we have $\omega_L\iso \omega_{L_1}*\omega_{L_2}$,  where for graded $K[L_1]$-module $M$ and a graded $K[L_2]$-module $N$ the homogeneous components of the Segre product $M*N$ are given by $(M*N)_i=M_i\tensor_KN_i$ for all $i$. Now if $L_2$ is a chain of length $r$, then $S=K[L_2]$ is a polynomial ring of dimension  $r+2$, and hence $\omega_{L_2}\iso S(-r-2)$, see for example  \cite[Proposition 3.6.11 and Example 3.6.10]{BH}. Hence we obtain
\begin{eqnarray}
\label{truncation}
\omega_L\iso \omega_{L_1}*S(-r-2)\iso (\omega_{L_1})_{\geq r+2},
\end{eqnarray}
where for a graded module $M$ and any integer $s$  we set $M_{\geq s}=\Dirsum_{i\geq s}M_i$. Let $t$ be the highest degree of a generator in a minimal set of generators of $\omega_{L_1}$. Then (\ref{truncation}) implies that $\omega_{L_1}$ is generated in the single degree $r+2$ if $r+2\geq t$. Thus for any $r\geq t-2$ we see that  $L$ is level.
\end{proof}

We would like to mention that the arguments given in the proof of Theorem~\ref{longchain} yield the following slightly more general result: for an arbitrary finite poset $P$ we set  $\gamma(P)=\max\{v(-\infty)\: \; v\in \MT_{0}(\hat{P})\}$. Note that $\gamma(P)$ is the highest degree of a generator in a minimal set of generators of the canonical module of the ideal lattice of $P$. Now let $P=P_1\union P_2$ and suppose that the elements of $P_1$ and $P_2$ are incomparable. Furthermore, assume that the ideal lattice $L_2$ of $P_2$ is level. Then the ideal lattice $L$ of $P$ is level if $\gamma(P_2)\geq \gamma(P_1)$.

\medskip
Computational  evidence leads us to conjecture that the equivalent conditions given in Theorem~\ref{hibi} do hold for any planar lattice (without any regularity assumption). In support of this conjecture we have the following result.

\begin{Theorem}
\label{diagonalposet}
Let $L$ be a simple planar lattice whose poset $P$ of join-irreducible elements has the single diagonal $x\gtrdot y$ with respect to a canonical chain decomposition. Then the following conditions are equivalent:
\begin{itemize}
\item[(a)] $L$ is level;
\item[(b)] $\height(x)+\depth(y)  \leq \rank \hat{P}+1$;
\item[(c)] either  $\depth(y) = \depth(x)+1$  or $\height(x) = \height(y)+1$.
\end{itemize}
\end{Theorem}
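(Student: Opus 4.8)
The plan is to prove the cycle (a)$\implies$(b)$\implies$(c)$\implies$(a), working via Birkhoff's theorem with $P=C_1\union C_2$ the canonical decomposition into two maximal chains meeting only through the diagonal $x\gtrdot y$. The implication (a)$\implies$(b) is immediate from Theorem~\ref{alsoviviana}, and (c)$\implies$(b) is the elementary estimate used before: if $\height(x)=\height(y)+1$ then $\height(x)+\depth(y)=\height(y)+\depth(y)+1\le \rank\hat{P}+1$, and symmetrically if $\depth(y)=\depth(x)+1$. The substance is (b)$\implies$(c). Here I would use that a single diagonal forces every chain of $\hat{P}$ to cross it at most once; consequently, for each $z$ both $\height(z)$ and $\depth(z)$ equal the larger of a within-chain value and a crossing value, each an explicit linear expression in the positions of $x$ and $y$ in $C_1,C_2$. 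Substituting these closed forms into (b) at $x\gtrdot y$, a short case analysis shows that $\height(x)+\depth(y)\le\rank\hat{P}+1$ can hold only if $\height(x)=\height(y)+1$ or $\depth(y)=\depth(x)+1$, which is precisely (c). The only labor is bookkeeping with the four numbers $\height(x),\depth(x),\height(y),\depth(y)$ against $\rank\hat{P}$.

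For (c)$\implies$(a) I would split into three cases according to regularity. If $L$ is regular, then since there is a single diagonal, Remark~\ref{check diagonal} identifies condition (b) of Theorem~\ref{hibi} with the single inequality in (b) above, so Theorem~\ref{hibi} yields (a). If instead $L^\vee$ is regular, I apply this first case to $P^\vee$: dualization interchanges the two chains and turns $x\gtrdot y$ into $y\gtrdot x$, so the conditions (b),(c) for $P^\vee$ coincide with those for $P$, and one concludes by the fact that $L$ is level if and only if $L^\vee$ is level. Inspecting the closed forms shows that these two regular cases exhaust everything except the situation where neither $P$ nor $P^\vee$ is regular; in that remaining case (c) in fact holds automatically (indeed $\height(x)=\height(y)+1$), so there $L$ must be shown level directly.

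For this remaining case I would invoke the generator criterion of Section~\ref{pseudo}: $L$ is level exactly when for every $v\in\MT(\hat{P})$ there is $v'\in\MT(\hat{P})$ with $v'(-\infty)=\rank\hat{P}$ and $v-v'\in\MS(\hat{P})$; it suffices to lower $v(-\infty)$ by one whenever $v(-\infty)>\rank\hat{P}$ and iterate. To lower it I take $h$ to be the function equal to $1$ on a poset ideal $D$ of $\hat{P}$ with $-\infty\in D$ and $\infty\notin D$, and $0$ elsewhere; then $h\in\MS(\hat{P})$ is nonzero, and $v-h$ is again strictly order reversing provided $v$ drops by at least $2$ across every cover leaving $D$ upward. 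The natural choice is the smallest ideal $D$ containing $-\infty$ that is closed under the \emph{tight} upward covers $z'\gtrdot z$ (those with $v(z)=v(z')+1$): by construction no tight cover leaves $D$ upward, so the only point to verify is that $\infty\notin D$.

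Establishing $\infty\notin D$ is the main obstacle. I would control $D$ by the invariant $v(z)\ge v(-\infty)-\height(z)$, which holds at $-\infty$ and is preserved by the tight-upward steps (a tight cover lowers $v$ by $1$ while raising $\height$ by at least $1$). It can be broken only when the downward closure crosses a cover whose height jumps by more than one, and here the single-diagonal hypothesis is decisive: the only such covers sit at the foot of the diagonal (below $x$ in its chain) and at $\infty$. The elements on which the invariant fails therefore form a sealed initial segment of one chain, all of whose upward covers re-enter $D$ and so generate nothing new. Hence the two elements that can cover $\infty$ still satisfy the invariant; since $v(-\infty)>\rank\hat{P}$ and every element of $P$ has height at most $\rank\hat{P}-1$, their $v$-values are at least $2$, so no tight cover reaches $\infty$ and $\infty\notin D$. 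Iterating drives $v(-\infty)$ down to $\rank\hat{P}$, proving $L$ level. I expect the delicate part to be exactly this last step: pinning down which covers have large height jump and checking that the invariant-violating elements cannot propagate upward to $\infty$.
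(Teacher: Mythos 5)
Your proposal is correct, and while your (a)~$\Rightarrow$~(b) and (b)~$\Rightarrow$~(c) coincide with the paper's, your proof of the substantial implication (c)~$\Rightarrow$~(a) takes a genuinely different route. For (b)~$\Rightarrow$~(c) the paper does exactly your bookkeeping: with $a=\depth(x)$, $d=\height(y)$, $b=|\{z\in\hat{P}:z<x,\,z\notin C_2\}|$, $c=|\{z\in\hat{P}:z>y,\,z\notin C_1\}|$ one has $\height(x)=\max\{b,d+1\}$, $\depth(y)=\max\{c,a+1\}$, $\rank\hat{P}=\max\{a+b,c+d,a+d+1\}$, and the negation of (c) contradicts (b). For (c)~$\Rightarrow$~(a), however, the paper never splits by regularity and never invokes Theorem~\ref{hibi}: it reduces by the duality $P\leftrightarrow P^\vee$ to the case $a+1\geq c$, distinguishes whether the longest chain of $\hat{P}$ passes through the diagonal ($b\leq d+1$) or lies inside $C_1$ ($b\geq d+1$), and in each case, given $v\in\MT(\hat{P})$ with $v(-\infty)>\rank\hat{P}$, finds by pigeonhole covers on which $v$ drops by at least $2$ and subtracts the indicator $u$ of the poset ideal they generate, so that $v-u\in\MT(\hat{P})$ and $v\notin\MT_0(\hat{P})$. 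You instead dispose of the regular case ($b\geq d+1$) by Theorem~\ref{hibi} together with Remark~\ref{check diagonal}, of the dual-regular case ($c\geq a+1$) by the levelness duality, and invent a new argument only for the remaining case $b\leq d$, $c\leq a$: your canonical ideal $D$ (closure of $-\infty$ under tight covers and downward closure) plays the role of the paper's pigeonhole ideal, and the invariant $v(z)\geq v(-\infty)-\height(z)$ replaces explicit gap-hunting. The delicate step you flagged does go through: in that case the only covers with height jump at least $2$ are the one from the element of $C_1$ directly below $x$ up to $x$, and the one from $\max(C_2)$ up to $\infty$; whenever $x$ first enters $D$ its invariant holds (entering along $C_1$ forces $v(x)=v(-\infty)-b\geq v(-\infty)-(d+1)=v(-\infty)-\height(x)$ precisely because $b\leq d$, while entering through the diagonal inherits it from $y$, whose invariant holds with equality), so the invariant propagates up both chains by tightness, the possible failures are confined to the sealed segment of $C_1$ below $x$, and both elements covered by $\infty$ have $v$-value at least $2$ whenever they lie in $D$, whence $\infty\notin D$. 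The trade-off: the paper's argument is self-contained and uniform over all cases, whereas yours concentrates the new work exactly on the case not already covered by the previously proved levelness criterion, at the price of leaning on the heavier Theorem~\ref{hibi} (itself a nontrivial induction) for the regular cases.
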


Observe that for all covering pairs $u,v\in P$ which are different from the diagonal, conditions (b) and (c) are always satisfied.

\begin{proof}[Proof of Theorem~\ref{diagonalposet}]
(a) $\Rightarrow$ (b) follows from Theorem~\ref{alsoviviana}.

(b) $\Rightarrow$ (c): Let $C_1\cup C_2$ be a canonical chain decomposition of $P$ and $x\gtrdot y$ its unique diagonal with $x\in C_1$ and $y\in C_2$. We define the integers  $a=|\{z\in \hat{P}:z>x\}|$, $b=|\{z\in \hat{P}: z<x,z\notin C_2\}|$, $c=|\{z\in \hat{P}:z>y, z\notin C_1\}|$ and $d=|\{z\in \hat{P}:z<y\}|$, see Figure~\ref{diagonal}. Then $\height (y)=d$ and $\depth (x)=a$. On the other hand,
$\depth (y)=\max\{c, a+1\}$ and $\height (x)=\max\{b, d+1\}$.
\begin{figure}[hbt]
\begin{center}
\psset{unit=0.8cm}
\begin{pspicture}(-5,-2)(4,6)

\rput(-1.5,-1){

\rput(0,0){$\bullet$}
\rput(0,2.9){$\bullet$}
\rput(0,5){$\bullet$}

\psline(1,-1)(0,0)
\psline(1,-1)(2,0)
\psline(1,6)(0,5)
\psline(1,6)(2,5)

\rput(1,-1){$\bullet$}
\rput(1,6){$\bullet$}

\rput(2,0){$\bullet$}
\rput(2,1.9){$\bullet$}
\rput(2,5){$\bullet$}

\psline(0,2.9)(2,1.9)

\rput(-0.65,1.43){$b$}
\rput(-0.65,4.3){$a$}
\rput(2.65,4.3){$c$}
\rput(2.65,1.43){$d$}
\rput(0.3,3.05){$x$}
\rput(1.7,1.75){$y$}
\rput(1,-1.3){$-\infty$}
\rput(1,6.3){$\infty$}

\psline[linestyle=dotted](0,0)(0,5)
\psline[linestyle=dotted](2,0)(2,5)
}
\end{pspicture}
\end{center}
\caption{}\label{diagonal}
\end{figure}

Suppose that none of the equalities in condition (c) hold. Then $\depth (y)>\depth (x)+1$ and $\height (x)> \height (y)+1$. So, $\max \{c,a+1\}>a+1$, and hence $c>a+1$ and  $\depth (y)=c$. Similarly, $\max \{b,d+1\}>d+1$, and hence $b>d+1$ and $\height (x)=b$. Note that $\hat{P}$ has just three maximal chains whose  lengths are $a+b$, $c+d$ and $a+d+1$,  so that $\rank \hat {P}=\max  \{a+b, c+d\}$. Thus (b) implies  $b+c\leq \max \{a+b+1 ,  c+d+1\}$,  a contradiction. Therefore one of the desired equalities in (c) hold for $x,y$.

(c)\implies (a):
Let
\begin{eqnarray*}
C_{1} &:& - \infty = x'_{b} < x'_{b-1} < \cdots < x'_{1} < x
< x_{1} < \cdots < x_{a-1} < x_{a} = \infty, \\
C_{2} &:& - \infty = y'_{d} < y'_{d-1} < \cdots < y'_{1} < y
< y_{1} < \cdots < y_{c-1} < y_{c} = \infty.
\end{eqnarray*}
The condition (c) guarantees that either $a + 1 \geq c$ or $b \leq d + 1$.
We may assume that $a + 1 \geq c$. The case  $b \leq d + 1$ is treated similarly by replacing $P$ by $P^{\vee}$.

Case 1. Suppose that a longest chain of $\hat{P}$ is
\[
- \infty = y'_{d} < y'_{d-1} < \cdots < y'_{1} < y < x <
x_{1} < \cdots < x_{a-1} < x_{a} = \infty.
\]
In other words, one has $b \leq d + 1$.
The rank of $\hat{P} = a + d + 1$.
Let $v \in\MT(\hat{P})$
with $v(-\infty) > a + d + 1$. We distinguish several cases:

\medskip
(i) Let $v(x) > a$.  Then there are $i$ and $j$ with
$v(x_{i}) - v(x_{i+1}) \geq 2$ and $v(y_{j}) - v(y_{j+1}) \geq 2$,
where $x_{0} = x$ and $y_{0} = y$.
Let $u \in \MS(\hat{P})$
with $u(z) = 1$ if either $z \leq x_{i}$ or $z \leq y_{j}$
and $u(z) = 0$, otherwise.  Then $v - u\in \MT(\hat{P})$.

\medskip
(ii) Let $v(x)=a$. In this case $v(y)\geq a+1$.

If $v(y) = a + 1$, then there are $i$ and $j$ with
$v(x'_{i+1}) - v(x'_{i}) \geq 2$ and $v(y'_{j+1}) - v(y'_{j}) \geq 2$,
where $x'_{0} = x$ and $y'_{0} = y$.
Let $u \in\MS(\hat{P})$ with
with $u(z) = 1$ if either $z \leq x'_{i+1}$ or $z \leq y'_{j+1}$
and $u(z) = 0$, otherwise.  Then $v - u\in\MT(\hat{P})$.

If $v(y) > a + 1$, then there are $i$ and $j$ with
$v(y_{j}) - v(y_{j+1}) \geq 2$
and $v(x'_{i+1}) - v(x'_{i}) \geq 2$,
where $x'_{0} = x$ and $y_{0} = y$.
Let $u\in\MS(\hat{P})$
with $u(z) = 1$ if either $z \leq x'_{i+1}$ or $z \leq y_{j}$
and $u(z) = 0$, otherwise.  Then $v - u\in\MT(\hat{P})$.

\bigskip
 Case 2. Suppose that a longest chain of $\hat{P}$ is
\[
- \infty = x'_{b} < x'_{b-1} < \cdots < x'_{1} < x <
x_{1} < \cdots < x_{a-1} < x_{a} = \infty.
\]
In other words, one has $b \geq d + 1$.
The rank of $\hat{P} = a + b$.

Let $v \in  \MT(\hat{P})$
with $v(-\infty) > a + b$. We distinguish several cases:

\medskip
(i) Let $v(x) > a$.  Then there are $i$ and $j$ with
$v(x_{i}) - v(x_{i+1}) \geq 2$ and $v(y_{j}) - v(y_{j+1}) \geq 2$,
where $x_{0} = x$ and $y_{0} = y$.
Let $u \in\MS(\hat{P})$
with $u(z) = 1$ if either $z \leq x_{i}$ or $z \leq y_{j}$
and $u(z) = 0$, otherwise.  Then $v - u\in \MT(\hat{P})$.

\medskip
(ii) Let $v(x) = a$.
Then there is $i$ with
$v(x'_{i+1}) - v(x'_{i}) \geq 2$,
where $x'_{0} = x$.
Since $b \geq d + 1$,
one has either
$v(y) - v(x) \geq 2$ or
there is $j$ with
$v(y'_{j+1}) - v(y'_{j}) \geq 2$,
where $y'_{0} = y$.

If $v(y) - v(x) \geq 2$, then there is $j$ with
$v(y_{j}) - v(y_{j+1}) \geq 2$, where $y_{0} = y$.
Let $u\in \MS(\hat{P})$
with $u(z) = 1$ if either $z \leq x'_{i+1}$ or $z \leq y_{j}$
and $u(z) = 0$ otherwise.  Then $v - u\in  \MT(\hat{P})$.

Suppose that there is $j$ with
$v(y'_{j+1}) - v(y'_{j}) \geq 2$,
where $y'_{0} = y$.
Let $u\in \MS(\hat{P})$
with $u(z) = 1$ if either $z \leq x'_{i+1}$ or $z \leq y'_{j+1}$
and $u(z) = 0$, otherwise.  Then $v - u\in \MT(\hat{P})$.

\medskip
The discussions in both cases show that every
$v\in \MT(\hat{P})$
which belongs to ${\mathcal T}_{0}(\hat{P})$
satisfies $v(-\infty) = \rank(\hat{P})$.
Hence $L$ is level, as desired.
\end{proof}

\texttt{}\begin{figure}[hbt]
\begin{center}
\psset{unit=0.8cm}
\begin{pspicture}(-5,0)(4,6)

\rput(-4,0){
\rput(3.4,3.6){$s$}
\psline(0,0)(7,0)
\psline(7,0)(7,5)
\psline(0,0)(0,2.2)
\psline(0,2.2)(3.7,2.2)
\psline(3.7,2.2)(3.7,5)
\psline(3.7,5)(7,5)
\rput(3.5,-0.3){$n$}
\rput(7.3,2.5){$m$}
\rput(1.85,2.5){$t$}
}
\end{pspicture}
\end{center}
\caption{}\label{ladder with one corner}
\end{figure}

\begin{Corollary}
\label{onecorner}
Let $I$ be the ladder determinantal ideal of 2-minors of the ladder as displayed in Figure~\ref{ladder with one corner}. Then $S/I$ is level if and only if $\min\{m,n\}\leq s+t$.
\end{Corollary}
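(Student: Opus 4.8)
The plan is to reduce the statement to the lattice-theoretic criterion of Theorem~\ref{diagonalposet}. First I would identify $S/I$ with a Hibi ring. The $2$-minors of the one-sided ladder $Y$ cut out the rank-one locus, so the assignment $x_{rc}\mapsto u_rv_c$ realizes $S/I$ as the normal toric ring $K[u_rv_c:(r,c)\in Y]$. The straightening relations $ (u_rv_c)(u_{r'}v_{c'})=(u_{\min}v_{\max})(u_{\max}v_{\min})$ show that this is the Hibi ring $K[\tilde L]$, where $\tilde L$ is the set $Y$ partially ordered by $(r,c)\le (r',c')$ if and only if $r\le r'$ and $c\ge c'$. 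Writing $Y$ as the full $m\times n$ grid with the upper-left $s\times t$ rectangle deleted, one checks that $Y$ is closed under the meet $(r,c)\wedge(r',c')=(\min\{r,r'\},\max\{c,c'\})$ and the corresponding join, so $\tilde L$ is a distributive lattice with least element $(1,n)$ and greatest element $(m,1)$. Hence $S/I$ is level if and only if $\tilde L$ is level.

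Next I would describe the poset $P$ of join-irreducible elements of $\tilde L$ and verify the hypotheses of Theorem~\ref{diagonalposet}. A direct inspection of the covering relations shows that $P$ is the union of two chains: the chain $\mathcal A$ of the cells $(r,n)$ with $2\le r\le m$, and the chain $\mathcal B$ obtained by first listing $(1,c)$ for $t<c\le n-1$ (increasing as $c$ decreases) and then $(s+1,c)$ for $1\le c\le t$. The only comparabilities across the two chains are $(r,n)<(s+1,c)$ with $r\le s+1$ and $c\le t$, and among these the unique covering relation is $(s+1,t)\gtrdot(s+1,n)$. Thus $\tilde L$ is a simple planar lattice whose poset $P$ has the single diagonal $x\gtrdot y$ with $x=(s+1,t)\in\mathcal B$ and $y=(s+1,n)\in\mathcal A$, which is exactly the setting of Theorem~\ref{diagonalposet}.

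It then remains to evaluate the inequality of Theorem~\ref{diagonalposet}(b) for this diagonal. Reading the relevant quantities off the two chains gives $\rank\hat P=\max\{\ell(\mathcal A),\ell(\mathcal B)\}+1$, while $\height(x)$ is the larger of the descent within $\mathcal B$ and the descent through the diagonal into $\mathcal A$, and $\depth(y)$ is the larger of the ascent within $\mathcal A$ and the ascent through the diagonal into $\mathcal B$. Substituting the ladder data—namely $\ell(\mathcal A)$ and $\ell(\mathcal B)$ in terms of $m$ and $n$, and the position of the corner in terms of $s$ and $t$—into $\height(x)+\depth(y)\le\rank\hat P+1$, the chain-internal alternatives match the rank and the inequality collapses to $\min\{m,n\}\le s+t$. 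The main point to get right is precisely this combinatorial dictionary: orienting $\tilde L$ so that $Y$ is genuinely a sublattice (the naive componentwise order fails, since the deleted block would destroy closure under meet), isolating the single diagonal among the many across-chain comparabilities, and keeping the boundary bookkeeping straight so that the two maxima reduce to the stated clean inequality. As an independent check one may use the equivalent condition (c) of Theorem~\ref{diagonalposet}: levelness fails exactly when both $\depth(y)>\depth(x)+1$ and $\height(x)>\height(y)+1$, that is, when $\min\{m,n\}>s+t$.
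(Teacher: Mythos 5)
Your proposal is correct and follows essentially the same route as the paper: both identify $S/I$ with the Hibi ring of a simple planar lattice whose poset of join-irreducible elements consists of two chains joined by a single diagonal, and then translate the levelness criterion of Theorem~\ref{diagonalposet} into the ladder parameters (the paper states the dictionary $n=c+d-1$, $m=a+b-1$, $s=a$, $t=d$ and reads off condition (c) as $b\leq d+1$ or $c\leq a+1$, which is exactly your computation traversed in the opposite direction). The only caveat is that you leave the final edge-versus-vertex bookkeeping implicit, which is where the apparent off-by-one between ``$\min\{m,n\}\leq s+t$'' and ``$\min\{m,n\}\leq s+t+1$'' is resolved depending on whether $m,n,s,t$ count side lengths of the ladder region (as in Figure~\ref{ladder with one corner}) or rows and columns of indeterminates.
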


\begin{proof}
First note that the ideal lattice of the poset with one diagonal depicted in Figure~\ref{diagonal} can be identified with the one-sided ladder as shown in Figure~\ref{ladder with one corner} with $n=c+d-1$, $m=a+b-1$, $s=a$ and $t=d$. Thus, by Theorem~\ref{diagonalposet}, $S/I$ is level if and only if condition (c) of Theorem~\ref{diagonalposet} holds. According to Figure~\ref{diagonal}, this is equivalent to say that $b\leq d+1$ or $c\leq a+1$, or equivalently $m\leq s+t$ or $n\leq s+t$, which implies the assertion.
\end{proof}

\section{Generalized Hibi rings}
\label{generalized}

Let  $P=\{x_1,\ldots,x_n\}$  be a finite poset and  $r$  a positive integer. In Section~\ref{pseudo} we identified the Hibi ring $K[L]$ as a subring of the polynomial ring $K[s,t_x\: x\in P]$. There is a different natural embedding of $K[L]$ into a polynomial ring, namely into the polynomial ring $K[s_x,t_x\:x\in P]$ where $P$ is the set of join-irreducible elements of $L$, see \cite{HH}. For this embedding the generators of $K[L]$ are  the monomials $u_\alpha=\prod_{x\in \alpha}t_x\prod_{x\in P\setminus\alpha}s_x$. This suggests an extension of  the notion of Hibi rings  as introduced in \cite{EHM}, see also \cite{EH}.

An  {\em $r$-multichain} of $\MI(P)$ \index{multichain} is a chain of
poset ideals of length $r$,
\[
\MI: \emptyset=\alpha_0\subseteq \alpha_1\subseteq \alpha_2\subseteq \cdots \subseteq \alpha_r=P.
\]
We define a partial order on the set $\MI_r(P)$ of all $r$-multichains of $\MI(P)$ by setting $\MI\leq \MI'$ if $\alpha_k\subseteq  \alpha_k^\prime$ for
$k=1,\ldots,r$.
Observe that the partially ordered set $\MI_r(P)$ is a distributive lattice, with  meet and join defined as follows: for
$\MI\: \alpha_0\subseteq \alpha_1\subseteq \cdots \subseteq \alpha_r$  and $\MI^\prime\: \alpha^\prime_0\subseteq \alpha^\prime_1\subseteq \cdots \subseteq \alpha^\prime_r$ we let $\MI\wedge \MI^\prime$ be the multichain with     $(\MI\wedge \MI^\prime)_k=\alpha_k\sect \alpha_k^\prime$ for $k=1,\ldots,r$, and
 $\MI\vee \MI^\prime$ the multichain with $(\MI\vee \MI^\prime)_k=\alpha_k\union \alpha^\prime_k$ for $k=1,\ldots,r$.

With each $r$-multichain $\MI$ of $\MI_r(P)$ we associate a monomial $u_{\MI}$ in the polynomial ring $K[x_{ij} : 1\leq i\leq r,
1\leq j\leq n]$ in $rn$ indeterminates which is defined as
\[
u_\MI=x_{1\gamma_1}x_{2\gamma_2}\cdots x_{r\gamma_r},
\]
where $x_{k\gamma_k}=\prod_{x_\ell\in \gamma_k}x_{k\ell}$ and $\gamma_k=\alpha_k\setminus \alpha_{k-1}$\quad for $k=1,\ldots,r$.

The {\em generalized Hibi ring} $\MR_r(P)$ is the toric ring generated by the  monomials $u_\MI$ with $\MI\in \MI_r(P)$. By what we said at the beginning of this section it is clear that $K[L]=\MR_2(P)$.

In \cite[Theorem 4.1]{EHM} is shown that $\MR_r(P)$ can be identified with the ordinary Hibi ring $K[L_r]$ where $L_r$ is the ideal lattice of $\MI_r(P)$, and further it is shown in \cite[Theorem 4.3]{EHM} that $\MI_r(P)$ is isomorphic to the poset $P_r=P\times Q_{r-1}$ where $Q_{r-1}=[r-1]$ with the natural order of its elements, and where $P\times Q_{r-1}$ denotes the direct product of the posets $P$ and $Q_{r-1}$.  In general the direct product $P\times Q$ of two posets $P$ and $Q$ is defined to be the poset which as a set is just the cartesian product of two sets $P$ and $Q$ and with partial order given by $(x,y)\leq (x',y')$ if and only if $x\leq x'$ and $y\leq y'$.

This identification of $\MR_r(P)$ with $K[L_r]$ was used in \cite{EHM} to prove that for any given $r\geq 2$, the Hibi ring $K[L]$ is Gorenstein if and only if $K[L_r]$ is Gorenstein.

We denote by $\type(R)$ the {\em Cohen--Macaulay type} of a Cohen--Macaulay ring  $R$. It is defined to be the number of generators of $\omega_R$.   Here we show

\begin{Theorem}
\label{new}
Let $L$ be a finite distributive lattice, and $r\geq 2$ an integer. Then
\begin{enumerate}
\item[{\em (a)}] $\type(K[L])\leq \type(K[L_r])$;
\item[{\em (b)}] $L$ is pseudo-Gorenstein if and only if $L_r$ is pseudo-Gorenstein;
\item[{\em (c)}] If $L_r$ is level then  $L$ is level.
\end{enumerate}
\end{Theorem}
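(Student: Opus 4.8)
The plan is to reduce all three statements to the combinatorics of $\MT(\hat P)$ and $\MT(\widehat{P_r})$, using the identification of $K[L_r]$ with the Hibi ring of $\MI_r(P)$ whose poset of join-irreducible elements is $P_r=P\times Q_{r-1}$ (\cite[Theorems~4.1 and 4.3]{EHM}), together with the facts that $\type(K[L])=\abs{\MT_0(\hat P)}$ and that, since the minimal degree of a generator of $\omega_L$ is $\rank\hat P$ (\cite{EHS}), $L$ is level if and only if every $v\in\MT_0(\hat P)$ has $v(-\infty)=\rank\hat P$. Part~(b) then needs no generators at all: for the product of a poset with a chain one has $\height_{\widehat{P_r}}((x,i))=\height(x)+(i-1)$ and $\depth_{\widehat{P_r}}((x,i))=\depth(x)+(r-1-i)$, whence $\height_{\widehat{P_r}}((x,i))+\depth_{\widehat{P_r}}((x,i))=\height(x)+\depth(x)+(r-2)$, while $\rank\widehat{P_r}=\rank\hat P+(r-2)$. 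Thus the criterion of Theorem~\ref{classification} holds at every $(x,i)\in P_r$ if and only if it holds at every $x\in P$, which is exactly~(b).

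For (a) and (c) I would build a single injection and track what it does to degrees and to minimal generators. Define $\phi\colon\MT(\hat P)\to\MT(\widehat{P_r})$ by
\[
\phi(v)(x,i)=v(x)+(r-1-i),\qquad \phi(v)(-\infty)=v(-\infty)+(r-2),\qquad \phi(v)(\infty)=0 .
\]
Checking the covering relations of $\widehat{P_r}$ shows $\phi(v)$ is strictly order reversing, so $\phi$ is well defined; it is injective since $v(x)=\phi(v)(x,r-1)$. The normalization is chosen for two reasons. First, along every fibre $\{x\}\times Q_{r-1}$ the function $\phi(v)$ decreases by exactly $1$ at each step. Second, across any covering pair coming from $P$ (including the pairs through $\pm\infty$) the decrease of $\phi(v)$ equals the decrease of $v$ across the corresponding pair of $\hat P$. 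Finally $\phi$ raises degree by the constant $\phi(v)(-\infty)-v(-\infty)=r-2=\rank\widehat{P_r}-\rank\hat P$.

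The heart of the argument is that $\phi$ sends minimal generators to minimal generators. Here I would use the reduction test: $v\in\MT(\hat P)$ is \emph{not} a minimal generator iff $v=v'+s$ with $v'\in\MT(\hat P)$ and $0\ne s\in\MS(\hat P)$; taking $\hat\beta$ to be the support of $s$ one checks this is equivalent to $v-\chi_{\hat\beta}\in\MT(\hat P)$ for a poset ideal $\beta$ of $P$, where $\hat\beta=\beta\cup\{-\infty\}$, and the latter membership holds exactly when $v(y)-v(x)\ge 2$ for every covering pair $x\gtrdot y$ with $y\in\hat\beta$, $x\notin\hat\beta$. Now suppose $\phi(v)-\chi_E\in\MT(\widehat{P_r})$ for some down-set $E$. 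Because $\phi(v)$ drops by exactly $1$ at each fibre step, $E$ can contain no boundary fibre step, so every fibre lies wholly inside or wholly outside $E$; as $E$ is a down-set this forces $E=(\beta\times Q_{r-1})\cup\{-\infty\}$ for a poset ideal $\beta$ of $P$. For such an $E$ the upper-boundary covers are precisely the images of the upper-boundary covers of $\hat\beta$, and by the decrease-preservation the reducibility of $\phi(v)$ by $E$ is equivalent to that of $v$ by $\hat\beta$. Hence $\phi(v)$ reducible forces $v$ reducible; equivalently $\phi(\MT_0(\hat P))\subseteq\MT_0(\widehat{P_r})$.

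Both remaining parts now drop out. For (a), $\phi$ is an injection $\MT_0(\hat P)\hookrightarrow\MT_0(\widehat{P_r})$, so $\type(K[L])=\abs{\MT_0(\hat P)}\le\abs{\MT_0(\widehat{P_r})}=\type(K[L_r])$. For (c) I argue contrapositively: if $L$ is not level, choose $v\in\MT_0(\hat P)$ with $v(-\infty)>\rank\hat P$; then $\phi(v)\in\MT_0(\widehat{P_r})$ and $\phi(v)(-\infty)=v(-\infty)+(r-2)>\rank\hat P+(r-2)=\rank\widehat{P_r}$, exhibiting a minimal generator of $\omega_{L_r}$ strictly above the minimal degree, so $L_r$ is not level. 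I expect the one genuinely delicate point to be the stability of minimal generators under $\phi$: the whole argument hinges on engineering $\phi$ so that fibre decreases equal $1$ (which rules out every reducing down-set except the fibre-saturated ones $(\beta\times Q_{r-1})\cup\{-\infty\}$) while transverse decreases are preserved (which matches the reducibility of $\phi(v)$ with that of $v$). Organizing the covering relations of $\widehat{P_r}$ around these two features is where the care is needed; once that bookkeeping is in place the implications are forced.
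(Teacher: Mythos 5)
Your proposal is correct and its overall architecture coincides with the paper's proof: the same injection $\iota(v)(x,i)=v(x)+(r-1-i)$ with the same normalization at $\pm\infty$, the same computation $\height_{\hat{P}_r}(x,i)=\height(x)+(i-1)$, $\depth_{\hat{P}_r}(x,i)=\depth(x)+(r-i-1)$, $\rank \hat{P}_r=\rank\hat{P}+(r-2)$ feeding into Theorem~\ref{classification} for (b), and the same contrapositive $\iota(v)(-\infty)=v(-\infty)+(r-2)>\rank\hat{P}_r$ for (c). The one place you genuinely diverge is the proof that $\iota$ carries minimal generators to minimal generators. The paper argues by slicing: given $u\in\MT(\hat{P}_r)$ with $\iota(v)-u\in\MS(\hat{P}_r)$, it restricts $u$ to each level $i$ and renormalizes to get $u_i\in\MT(\hat{P})$ with $v-u_i\in\MS(\hat{P})$, then invokes minimality of $v$ to force $u_i=v$ for every $i$, hence $u=\iota(v)$. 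You instead use the degree-one reduction criterion and classify the down-sets $E$ with $\iota(v)-\chi_E\in\MT(\hat{P}_r)$, showing that the unit drop of $\iota(v)$ along each fibre $\{x\}\times Q_{r-1}$ forces $E=(\beta\times Q_{r-1})\union\{-\infty\}$ for a poset ideal $\beta$ of $P$, after which reducibility of $\iota(v)$ by $E$ matches reducibility of $v$ by $\hat\beta$. Both arguments exploit exactly the same two features of the normalization (fibre decreases equal one, transverse decreases are preserved); yours is a little longer but makes fully explicit why only fibre-saturated ideals can reduce $\iota(v)$, whereas the paper's slicing is quicker but leaves the order-reversing check for $v-u_i$ at $-\infty$ (with its ad hoc choice $u_i(-\infty)=\max_x u_i(x)+1$) to the reader. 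Either way the deductions of (a) and (c) from this lemma are identical.
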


\begin{proof}
(a) We need to show that $|\MT_0(\hat{P})|\leq |\MT_0(\hat{P}_r)|$. In order to prove this we define an injective map $\iota \: \MT_0(\hat{P})\to \MT_0(\hat{P}_r)$. Given $v\in \MT_0(\hat{P})$ we let $\iota(v)(x,i)=v(x)+(r-1-i)$ and $\iota(v)(\infty)=0$ and $\iota(v)(-\infty)=v(-\infty)+(r-2)$. Obviously, $\iota(v)\neq \iota(w)$ for $v, w\in \MT_0(\hat{P})$ with $v\neq w$ and $\iota(v)\in \MT(\hat{P}_r)$. Thus it remains to show that $\iota(v)$ actually belongs to $\MT_0(\hat{P_r})$. We set $v'=\iota(v)$, and show that if $v'-u\in  \MS(\hat{P}_r)$ for some $u\in \MT(\hat{P}_r)$, then $v'=u$.

For any $w\in \MT(\hat{P}_r)$ and for $i\in [r-1]$ we define the function  $w_i$ on $\hat{P}$ as follows:
\[
w_i(x)=w(x,i)-(r-1-i) \quad \text{for all} \quad x\in P,
\]
$w_i(\infty)=0$ and $w_i(-\infty)=\max\{w_i(x)\: x\in P\}+1$.  Then $w_i\in \MT(\hat{P})$.

Since $v'-u\in  \MS(\hat{P}_r)$ it follows that  $v-u_i=v'_i-u_i\in \MS(\hat{P})$. Since $v\in \MT_0(\hat{P})$ we see that $v=u_i$ for all $i$. This shows that $v'=u$.

(b) Let $x\in P$ and $i\in [r-1]$. We claim that $\height_{\hat{P}_r}(x,i)=\height_{\hat{P}}(x)+(i-1)$ and $\depth_{\hat{P}_r}(x,i)=\depth_{\hat{P}}(x)+(r-i-1)$. If $\height_{\hat{P}_r} (x,i)=1$, then there is nothing to prove. Let $\height_{\hat{P}_r} (x,i)>1$ and let $x=x_0>x_1>\cdots>x_d>-\infty$ be a  maximal chain of length $\height_{\hat{P}}(x)$ in $\hat{P}$. Then
$(x,i)=(x_0,i)>(x_1,i)>\cdots>(x_d,i)>(x_d,i-1)>\cdots>(x_d,1)>-\infty$ is a maximal chain of length $\height_{\hat{P}}(x)+(i-1)$ in ${\hat{P}_r}$. It follows that
$\height_{\hat{P}_r}(x,i)\geq \height_{\hat{P}}(x)+(i-1)$. To prove the other inequality we use induction on height. Let $(x,i)=z_0>z_1>\cdots >z_t>-\infty$ be a maximal chain of length $\height_{\hat{P}_r}(x,i)$ in $\hat{P}_r$. Then $z_1=(x,i-1)$ or $z_1=(x',i)$ where $x'$ is an element of $P$ covered by $x$. If $z_1=(x,i-1)$, then by induction hypothesis, we get $\height_{\hat{P}_r} (x,i-1)\leq \height_{\hat{P}} (x)+(i-2)$, and hence $\height_{\hat{P}_r} (x,i)=\height_{\hat{P}_r} (x,i-1)+1\leq \height_{\hat{P}} (x)+(i-1)$. If $z_1=(x',i)$, then our induction hypothesis implies that $\height_{\hat{P}_r} (x',i)\leq \height_{\hat{P}} (x')+(i-1)$, and hence $\height_{\hat{P}_r} (x,i)=\height_{\hat{P}_r} (x',i)+1\leq \height_{\hat{P}} (x)+(i-1)$, where the last inequality follows from the fact that
$\height_{\hat{P}} (x')\leq \height_{\hat{P}} (x)-1$. So, $\height_{\hat{P}_r}(x,i)=\height_{\hat{P}}(x)+(i-1)$. A similar argument can be applied to prove the claimed  formula regarding the depth. As a side result, we obtain that $\rank \hat{P}_r=\rank \hat{P}+(r-2)$.

It follows that $\height_{\hat{P}}(x)+\depth_{\hat{P}}(x)=\rank \hat{P}$ if and only if $\height_{\hat{P}_r}(x,i)+\depth_{\hat{P}_r}(x,i)=\rank \hat{P}_r$. Thus Theorem~\ref{classification} yields  the desired result.

(c) Suppose  that $L$ is not level. Then there exists $v\in \MT_0(\hat{P})$ with $v(-\infty)>\rank \hat{P}$. Then $\iota(v)$, as defined in the proof of part (a), belongs to  $\MT_0(\hat{P}_r)$ and
\[
\iota(v)(-\infty)=v(-\infty)+(r-2)>\rank \hat{P}+(r-2)=\rank \hat{P}_r.
\]
This shows that $L_r$ is not level.
\end{proof}

Note that if $\type(K[L])=1$, then $\type(K[L_r])=1$ for all $r\geq 2$, since by \cite[Corollary~4.5]{EHM}, $K[L]$ is Gorenstein if and only if $K[L_r]$ is Gorenstein. But the following example given in Figure~\ref{type} shows that the inequality in Theorem~\ref{new} (a) may be strict. Indeed, let $P$ be the poset depicted in the left side of Figure~\ref{type}. Then $P_3$ is the poset which is shown in right side of Figure~\ref{type}. It can be easily checked, by considering all possible strictly  order reversing functions on $\hat{P}$ and $\hat{P}_3$ which correspond to the minimal generators of the canonical module, that $\type(K[L])=2$, but $\type(K[L_3])=3$.
\begin{figure}[hbt]
\begin{center}
\psset{unit=0.8cm}
\begin{pspicture}(-10.5,2)(4,5)

\rput(-1.5,2){
\psline(-6,0)(-6,1.5)
\rput(-6,0){$\bullet$}
\rput(-6,1.5){$\bullet$}

\rput(-4.5,0){$\bullet$}

%%%%%%%%%%%%%%%%%%%%%%%%

\psline(1,0)(0,1)
\rput(1,0){$\bullet$}
\rput(0,1){$\bullet$}

\psline(1,0)(2,1)
\rput(1,0){$\bullet$}
\rput(2,1){$\bullet$}

\psline(0,1)(1,2)
\rput(0,1){$\bullet$}
\rput(1,2){$\bullet$}

\psline(2,1)(1,2)
\rput(2,1){$\bullet$}
\rput(1,2){$\bullet$}

\psline(3.5,0)(3.5,1.5)
\rput(3.5,0){$\bullet$}
\rput(3.5,1.5){$\bullet$}
}
\end{pspicture}

\end{center}
\caption{}\label{type}
\end{figure}

 In Theorem~\ref{new}(c) we only could  prove that if $L_r$ is level, then $L$  is level. However we expect
that the other  implication also holds. An indication that this might be true  is Miyazaki's theorem and the easy to prove fact that  a poset $P$ is Miyazaki
if and only if $P_r$ is Miyazaki.  Moreover, if the necessary condition  for levelness given in Theorem~\ref{alsoviviana} would also be sufficient, which
indeed we expect, then one could also conclude that $L$ is level if and only if $L_r$ is level, since $\hat{P}$ satisfies the inequalities (\ref{inequality})
if and only if this is the case for $\hat{P}_r$.

\end{document}